\documentclass{amsart}
\usepackage{amsmath, mathtools}
\usepackage{amssymb}
\usepackage{blkarray}
\usepackage{bbm}
\usepackage{mathdots}
\usepackage{dsfont}
\usepackage{todonotes,lscape}
\usepackage{graphicx, hhline}
\usepackage{hyperref}
\usepackage{tikz}
\usetikzlibrary{calc}
\usepackage{tabularx}
\usepackage{caption,subcaption}
\usepackage{comment, appendix}

\usepackage[backend=biber,style=alphabetic]{biblatex}
\addbibresource{thesis.bib}
\pagestyle{plain}

\textheight 8.5 in


\def\QQ{{\mathbb Q}}
\def\RR{{\mathbb R}}

\def\cal{\mathcal}


\def\l{\lambda}
\def\L{\Lambda}

\renewcommand{\phi}{\varphi}


\def\ones{\mathbbm{1}}



\def\conv{\operatorname {conv}}

\def\supp{\operatorname {supp}}

\def\Diag{\operatorname {Diag}}

\def\spanset{\operatorname {span}}
\def\rowspan{\operatorname {rowspan}}

\def\size{\operatorname {size}}


\theoremstyle{definition}
\newtheorem{definition}{Definition}[section]
\newtheorem{example}[definition]{Example}
\newtheorem{remark}[definition]{Remark}

\theoremstyle{plain}
\newtheorem{theorem}[definition]{Theorem}
\newtheorem{lemma}[definition]{Lemma}

\newtheorem{corollary}[definition]{Corollary}
\newtheorem*{corollary*}{Corollary}

\setcounter{MaxMatrixCols}{12}

\begin{document}

\title{Eigenpolytope Universality and\\Graphical Designs}
\author{Catherine Babecki and David Shiroma}
\address{Department of Mathematics and Department of Computational and Mathematical Sciences, 
California Institute of Technology, 
Pasadena, CA 91125}
\email{cbabecki@caltech.edu, dshiroma@google.com}
\thanks{
Research partially supported by the Walker Family Endowed Professorship in Mathematics at the University of Washington.\\
{\em Corresponding Author}: Catherine Babecki}

\keywords{ Graph Laplacian, Polytopes,  Eigenpolytopes, Graphical Designs, Gale Duality, Complexity, Quadrature Rules, Graph Sampling.}
\subjclass[2020]{05C50, 68R10, 52B12, 52B35, 90C60} 

\begin{abstract}
We show that the eigenpolytopes of graphs are universal in the sense that every polytope, up to affine equivalence, appears as the eigenpolytope of some positively weighted graph. 
We next extend the theory of graphical designs, which are quadrature rules for graphs, to positively weighted graphs. Through Gale duality for polytopes, we show a bijection between graphical designs and the faces of eigenpolytopes. This bijection proves the existence of graphical designs with positive quadrature weights, and upper bounds the size of a minimal graphical design.  Connecting this bijection with the universality of eigenpolytopes, we establish three complexity results: it is strongly NP-complete to determine if there is a graphical design smaller than the mentioned upper bound, it is NP-hard to find a smallest graphical design, and it is \#P-complete to count the number of minimal graphical designs.
\end{abstract}
\maketitle

\section{Introduction}
 This paper unites two combinatorial constructions arising from the \emph{combinatorial graph Laplacian}, namely \emph{eigenpolytopes} \cite{Godsil_GGP} and \emph{graphical designs} \cite{graphdesigns}, through the theory of \emph{Gale duality} for polytopes \cite{GaleOriginal, GrunbaumBook}.  
An eigenpolytope is a polytope arising from the eigenspaces of a graph Laplacian.
The structure and symmetries of these polytopes provide information about the structure and symmetries of the underlying graph, particularly when the underlying graph is highly structured (see, for instance \cite{Godsil_DistReg}). 
A graphical design is a quadrature rule for a graph. 
Classical numerical quadrature allows one to compute or approximate definite integrals of certain classes of functions on $\RR^n$ by sampling at finitely many points.  
Analogously, a graphical design is a rule to compute the average of certain Laplacian eigenvectors by sampling on a subset of graph vertices. 
Previous work on graphical designs often focuses on unweighted regular graphs (e.g. \cite{Golubev, cubescodes,designsGaleDuality, RekhaStefanRandomWalks}), where the eigenvectors of a graph are independent of the choice of graph Laplacian.
For regular, unweighted graphs, the bijection between graphical designs and eigenpolytopes we establish is precisely the one shown in \cite{designsGaleDuality}.
For more literature review and details on the connection between graphical designs and eigenpolytopes through Gale duality, we refer to \cite{designsGaleDuality}.  

We summarize the main results of this paper.
\begin{itemize}
 \item The Laplacian eigenspaces of positively weighted graphs are general: for any collection of orthogonal subspaces which spans $\RR^n$ and contains the span of the all-ones vector as a subspace, there exists a connected, positively weighted graph on $n$ vertices whose Laplacian eigenspaces are precisely the given subspaces 
 (Lemma \ref{lem: ONB to graph}). The proof reveals a polyhedral structure underlying the possible eigenvalues of a graph with a given eigenbasis. We use this result to prove that every polytope up to affine equivalence appears as an eigenpolytope of a positively weighted graph (Theorem \ref{lem: embed polytope algo}). 
   Moreover, these are constructive proofs with strongly polynomial time algorithms.
    \item We establish a combinatorial bijection between graphical designs and faces of eigenpolytopes using Gale duality (Theorem \ref{thm: main gale for D-A}). Polyhedral geometry then provides an upper bound on the size of the support of minimal graphical designs (Theorem \ref{thm:general upper bound}).  We note that this bijection holds when the combinatorial Laplacian is replaced by any symmetric operator with the all-ones vector as an eigenvector (Remark \ref{rem: any operator with ones will do}). 
    \item Through Gale duality and our algorithms, we establish complexity results for graphical designs as translations of polytope hardness results (Theorems \ref{thm: is facet bound tight complexity}, \ref{thm: smallest design complexity}, \ref{thm: count designs}). We provide a linear program which finds a graphical design in polynomial time with a guaranteed sparseness, though minimality is not guaranteed (Remark \ref{rem: polytime LP}).
\end{itemize}

We organize the paper as follows.
Section \ref{sec: eigenpolytopes} introduces eigenpolytopes. 
Section \ref{sec: algorithms} contains our algorithmic results showing the universality of Laplacian eigenspaces and eigenpolytopes.
Section \ref{sec: design definitions} establishes definitions and notation for graphical designs on positively weighted graphs.
Section \ref{sec: gale duality} briefly recounts the theory of Gale duality for polytopes and uses this machinery to connect graphical designs to the eigenpolytopes of the graph, a key structural result.
This correspondence provides a proof of existence and an upper bound on the size of a minimal graphical design.
Using our strongly polynomial time algorithms to create graphs with given eigenpolytopes, we establish the following complexity results in Section \ref{sec: complexity} by translating established hardness results for polytopes. It is strongly NP-complete to determine if there is a design smaller than the upper bound established in Section \ref{sec: gale duality}, it is NP-hard to find a smallest graphical design, and it is \#P-complete to count the number of support-minimal graphical designs.

{\bf Acknowledgments.} We thank Timothy Duff, Shayan Oveis-Gharan, Stefan Steinerberger, and Rekha Thomas for their feedback and guidance, as well as the reviewers for many helpful comments and suggestions.

 \section{Eigenpolytopes of Weighted Graphs} \label{sec: eigenpolytopes}
Let $G= ([n],E, w)$ be a connected graph with vertex set $[n]:=\{1,2,\ldots,n\}$, edge set $E$, and positive edge weights $w: E \to \RR_{> 0}$. 
 Let $A \in \RR^{n\times n}$ be the weighted \emph{adjacency matrix} of $G$, with $A_{ij} = w(ij)$ if $ij \in E$ and 0 otherwise, and $D$ be the weighted (diagonal) \emph{degree matrix} of $G$, with $D_{ii} = \deg i = \sum_{ij \in E} w(ij)$.
The \emph{(combinatorial) Laplacian} of $G$ is the symmetric matrix $L = D-A$. 
 Since  $L$ is positive semidefinite, it has non-negative eigenvalues and an orthogonal set of eigenvectors $\varphi_1, \ldots, \varphi_n$ that form a basis of $\RR^n$. For general references on spectral and algebraic graph theory, we refer the reader to \cite{ChungSpectral,GodsilRoyleBook, SpielmanBook}.
   
 Letting $\ones$ denote the all-ones vector in $\RR^n$, we always have that $L \ones = 0 $, since the $i$-th row of $A$ sums to $\deg(i) = \sum_{ij \in E} w(ij)$. Therefore, $\ones$ is an eigenvector of $L$ with eigenvalue $0$. Moreover, since $G$ is connected, the eigenspace of 0 is spanned by $\ones$; we denote this eigenspace by $\L_1.$
It is convenient to not normalize the eigenvectors $\varphi_i$ so that we may take $\phi_1 = \ones$ as the basis vector for $\L_1$.
We will refer to the eigenspaces and eigenvalues of $L$ as the eigenspaces and eigenvalues of the graph $G$. 
 We use the following running example to illustrate many definitions and results. The circled numbers in a graph are a labeling of the graph vertices. 
  
\begin{example} \label{ex: runing ex 1}
Consider the weighted graph in Figure~\ref{fig: running ex}.  The weights are given by $w(ij) = 1/|N(i)| + 1/|N(j)|$, where $|N(i)|$ is the number of edges incident to $i$.

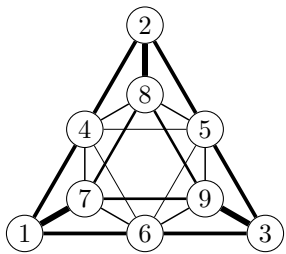
\begin{figure}[h]
\begin{center}
\begin{tikzpicture}[baseline={($ (current bounding box.west) - (0,1ex) $)},scale=.8]
        \tikzstyle{bk}=[circle, fill = white,inner sep= 2 pt,draw]
\node (v1) at (-2, 0) [bk] {1};
\node (v2) at (0, 3.464)  [bk] {2};
\node (v3) at (2, 0) [bk] {3};
\node (v4) at (-1, 1.732) [bk] {4};
\node (v5) at (1, 1.732) [bk] {5};
\node (v6) at (0, 0) [bk] {6};
\node (v7) at (-1, 0.577) [bk] {7};
\node (v8) at (0, 2.309) [bk] {8};
\node (v9) at (1, 0.577) [bk] {9};
\draw[line width=0.5mm] (v1) -- (v4) -- (v2) -- (v5) -- (v3) -- (v6) -- (v1);
\draw[line width=0.2mm]  (v4) -- (v8) -- (v5) -- (v9) -- (v6) -- (v7) -- (v4);
\draw[line width=0.03mm]  (v4) -- (v5) -- (v6) -- (v4);
\draw[line width=0.35mm]  (v7) -- (v8) -- (v9) -- (v7);
\draw[line width=0.8mm]  (v1) -- (v7);
\draw[line width=0.8mm]  (v2) -- (v8);
\draw[line width=0.8mm]  (v3) -- (v9);
\end{tikzpicture}  
\end{center}
\caption{A weighted graph on nine vertices. Thicker edges indicate greater edge weights.}
\label{fig: running ex}
\end{figure}
Its Laplacian $L$ is 
\[ 1/30
    \begin{bmatrix}
    46 & 0 & 0 & -15 & 0 & -15 & -16 & 0 & 0 \\
    0 & 46 & 0 & -15 & -15 & 0 & 0 & -16 & 0 \\
    0 & 0 & 46 & 0 & -15 & -15 & 0 & 0 & -16 \\
    -15 & -15 & 0 & 72 & -10 & -10 & -10 & -10 & 0 \\
    0 & -15 & -15 & -10 & 72 & -10 & 0 & -11 & -11 \\
    -15& 0 & -15 & -10 & -10 & 72 & -11 & 0 & -11 \\
    -16 & 0 & 0 & -11& 0 & -11 & 62 & -12 & -12 \\
    0 & -16& 0 & -11 & -11 & 0 & -12 & 62 & -12\\
    0 & 0 & -16 & 0 & -11 & -11 & -12 & -12 & 62
    \end{bmatrix}.
    \]
    
Table ~\ref{tab: running ex spectrum} shows the six distinct eigenvalues of $L$ in the left column, each followed on the right by a basis of orthogonal eigenvectors for that eigenspace. 
We note that $\lambda_2, \lambda_4, \lambda_6$ are the roots of $450x^3 - 3030x^2 + 6321x - 3844$ and $\l_3, \l_5$ are the roots of $75 x^2 - 340 x + 373$. We show decimal approximations for conciseness.
\end{example}
\begin{table}[h]
    \begin{center}\[ \scriptsize{
\begin{array}{c|ccccccccc |}
\text{vertex} & 1 & 2 & 3 & 4 & 5 & 6 & 7 & 8 & 9 \\
\hhline{= | =========|}
        \lambda_1 = 0 & 1 & 1 & 1 & 1 & 1 & 1 & 1 & 1 & 1 \\
        \hline
        \lambda_2 = 1.069 & -.6262 & .5616 & .0646 & -.0264 & .256 & -.2291 & -.3059 & .2744 & .0316 \\
                          & -.2870 & -.3988 & .6858 & -.2798 & .1171 & .1627 & -.1402 & -.1948 & .3350 \\
        \hline 
        \lambda_3 = 1.861 & .3193 & .3193 & .3193 & .1406 & .1406 & .1406 & -.4600 & -.4600 & -.4600 \\
        \hline
        \lambda_4 = 2.661 & .3248 & -.4014 & .0766 & .0507 & .2150 & -.2658 & -.4852 & .5996 & -.1144 \\
                          & .2760 & .1433 & -.4193 & -.2776 & .1827 & .0949 & -.4123 & -.2141 & .6263 \\
        \hline
        \lambda_5 = 2.672 & .3468 & .3458 & .3468 & -.4499 & -.4499 & -.4499 & .1032 & .1032 & .1032 \\
        \hline
        \lambda_6 = 3.003 & .0724 & -.0995 & .0271 & .1880 & .5015 & -.6894 & .2707 & -.3721 & .1015 \\
                          & .0731 & .0261 & -.0992 & -.6876 & .5066 & .1810 &.2734 & .0977 & -.3711
    \end{array} } \]
\end{center}
    \caption{The spectral information of the graph shown in Figure~\ref{fig: running ex}.}
    \label{tab: running ex spectrum}
\end{table}

For a given graph $G$ with $m$ eigenspaces, we denote a fixed arbitrary ordering of the eigenspaces as $ \L_1 = \spanset\{\ones\} < \ldots < \L_m$. Let $U$ denote a matrix whose rows are an eigenbasis of $L$, and let $I \subset [m]$ index a proper subset of eigenspaces of $G$. We use $U_{I}$ to denote the submatrix of $U$ consisting of all rows corresponding to the eigenspaces $\L_i$ for $i \in I$. Let $\cal U_I$ denote the collection of columns of $U_I$, which we note may occur with repetition. 

\begin{definition}[\cite{Godsil_GGP,designsGaleDuality}] \label{def:our eigenpolytopes}
Let $G= ([n],E,w)$ have $m$ eigenspaces $ \L_1 < \ldots < \L_m$, and let $I  \subset [m]$ index a set of eigenvalues of $G$. The polytope $P_{I} = \conv ( \cal U_{I} ) $ is a (Laplacian) {\em eigenpolytope} of $G$ for the eigenvalues indexed by $I$.
\end{definition}

Eigenpolytopes were first defined by Godsil \cite{Godsil_GGP} as above for single eigenvalues of the adjacency matrix $A$.  To the best of our knowledge, the straightforward generalization to multiple eigenvalues first appeared in \cite{designsGaleDuality}.
The eigenpolytope literature is often interested in the connection between the structure and symmetry of eigenpolytopes and the automorphism group of the graph (see for example, \cite{Godsil_DistReg,ChanGodsil}). The choice here and in \cite{designsGaleDuality} to
consider weighted graphs with no requirement of symmetry is atypical of the literature. Though the definition depends on a choice of eigenbasis, eigenpolytopes of a graph are well-defined up to affine equivalence, which is sufficient for our purposes.

\begin{example}
    Figure~\ref{fig: ex eigenpolytope} shows the eigenpolytope $P_{\{5,6\}}$ of the graph $G$ from Example~\ref{ex: runing ex 1}, using the same eigenspace ordering as in Table~\ref{tab: running ex spectrum}. Its vertices are given by the columns of the following submatrix extracted from Table~\ref{tab: running ex spectrum}, where the horizontal line separates $\L_5$ from $\L_6$. 
\[ \left[ \small{
\begin{array}{ccccccccc }
     .3468 & .3458 & .3468 & -.4499 & -.4499 & -.4499 & .1032 & .1032 & .1032 \\
        \hline
       .0724 & -.0995 & .0271 & .1880 & .5015 & -.6894 & .2707 & -.3721 & .1015 \\
          .0731 & .0261 & -.0992 & -.6876 & .5066 & .1810 &.2734 & .0977 & -.3711
    \end{array}  }\right] \]
\end{example}
\begin{figure}[h]
    \centering
    \begin{tikzpicture}[scale = .5]
\tikzstyle{bk}=[circle,draw =black, fill=white ,inner sep=1.5pt]
      \tikzstyle{middle}=[circle,draw =black, fill =black ,inner sep=1.5pt]
      \tikzstyle{frontcy}=[circle,draw =black, fill=black ,inner sep=2pt]
    \tikzstyle{front}=[circle,draw =black, fill=black ,inner sep=2pt]
        \tikzstyle{backcy}=[circle,draw =black, fill=black ,inner sep=1 pt]
  \tikzstyle{back}=[fill,circle, draw=black,fill=black, inner sep=1 pt]
    \tikzstyle{midcy}=[circle,draw =black, fill =black,inner sep=1.5pt]
    \fill[fill = gray!10] (0, 3.5) -- (5, 2) --  (2, -.3) -- (-3.75, 0) -- (0, 3.5);
        \fill[fill = gray!20] (7, 1)  -- (5, 2) --  (2, -.3) -- (6, 0) -- (7, 1) ;
            \fill[fill = gray!40] (6, 0) -- (7, -1)  --  (7, 1) -- (6, 0) ;    
           \fill[fill = black!30] (7, -1)  -- (5, -2) --  (2, -.3) -- (6, 0) -- (7, -1) ;
           \fill[fill = black!40]  (2, -.3) -- (-3.75, 0) -- (0,-2) --   (5, -2) -- (2, -.3) ;
\node (v1) at (6, 0) [front]{} ;
\node (v2) at (7, -1)  [middle] {};
\node (v3) at (7, 1) [middle] {};
\node (v4) at (0, 3.5) [midcy] {};
\node (v5) at (-3.75, 0) [frontcy] {};
\node (v6) at (0, -2) [middle] {};
\node (v7) at (2, -.3) [frontcy]{};
\node (v8) at (5, -2) [back] {};
\node (v9) at (5, 2) [backcy] {};
\node (1) at (6, .7) {1};
\node (2) at (7.5, -1.3)  {2};
\node (3) at (7.5, 1.3) {3};
\node (4) at (0.2, 4)  {4};
\node (5) at (-4.4, 0) {5};
\node (6) at (0.2, -2.6)  {6};
\node (7) at (1.7, .4)  {7};
\node (8) at (5.3, -2.4) {8};
\node (9) at (5.3, 2.4) {9};
\draw[thick] (v1) -- (v2) --(v3);
\draw (v7) --(v1) -- (v3) --(v9);
\draw[dashed] (v9) -- (v8);
\draw[thick] (v8) -- (v7);
\draw[thick] (v5) --(v6) -- (v8) -- (v2);
\draw[dashed] (v4) -- (v6);
\draw (v4) -- (v9) -- (v7) -- (v5) -- (v4);

\end{tikzpicture}
    \caption{The eigenpolytope $P_{\{5,6\}}$ of the running example }
    \label{fig: ex eigenpolytope}
\end{figure}
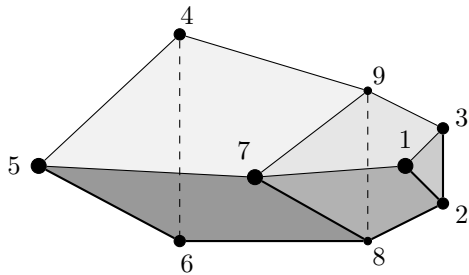

\section{Weighted Graphs From Polytopes} \label{sec: algorithms}

 In this section, we show that any polytope up to affine equivalence can appear as an eigenpolytope of a positively weighted graph.  
 We first show that any orthogonal basis for $\RR^n$ which includes $\ones $ as a basis vector, and any partition of this basis with $\{\ones\}$ as a part, give rise to a positively weighted graph $G = ([n], E, w)$ where each part of the partition spans a distinct Laplacian eigenspace of $G$. Our proof reveals a polyhedral structure underlying the valid choices of eigenvalues that define such graphs. These results are phrased in terms of rational numbers for the purpose of time-complexity statements, but the same algorithms are valid for real numbers.

 To state our complexity results, we recall some basics of encoding size. We reference a streamlined presentation tailored to our needs in \cite[Chapter 2]{SchrijverLPIPbook} and refer further to \cite{ComputersIntractability} for an in-depth treatment. 
 
The \emph{size} of a rational number $r = p/q$ in reduced form is \[\size(r) = 1 + \lceil \log_2 ( |p| +1 ) \rceil  + \lceil \log_2 ( |q| +1 ) \rceil ,\]
and the \emph{size} of a rational matrix $M \in \QQ^{n\times m}$ is 
\[\size(M) = mn + \sum_{i,j} \size (M_{ij}).\]
An algorithm is \emph{strongly polynomial time} if its run time is polynomial in size of the input, not just the dimension of the input. We will often present our input as a list of vectors $\cal V = \{ v_1,\ldots,  v_m\} \subset \QQ^n$ rather than as a matrix $V = [ v_1 \, \ldots \, v_m] \in \QQ^{n\times m}$. Clearly, $\size(\cal V) = \sum_{i=1}^m \size(v_i) = \size(V)$
 
We overload the notation $\Diag$; for a vector $v\in \RR^n$, $\Diag(v) \in \RR^{n\times n}$ is the diagonal matrix with $\Diag(v)_{ii} = v_i$, and for a matrix $M \in \RR^{n\times n}$,  $\Diag(M) \in \RR^{n\times n}$ is the diagonal matrix which forgets the off-diagonal entries of $M$.

 \begin{lemma} \label{lem: ONB to graph}
Let $\cal B = \{\phi_1 = \ones, \phi_2, \ldots, \phi_n\}$ be a rational orthogonal basis of $\RR^n$, partitioned as $\cal B= \pi_1 \sqcup \ldots \sqcup \pi_m$  with at least two parts such that $\pi_1 = \{\ones\}$. There is a connected graph $G$ with positive rational edge weights and $m$ Laplacian eigenspaces $\L_1, \ldots, \L_m$ such that $\pi_i$ spans $\L_i$. The graph $G$ can be constructed in strongly polynomial time, i.e.  polynomial in $\size(\cal B)$. 
\end{lemma}

\begin{proof}
    Let $B\in\QQ^{n\times n}$ have $i$-th row $\phi_i$.  The rows of $\tilde{B} = \Diag( \frac{1}{\|\phi_1\|}, \ldots, \frac{1}{\|\phi_n\|}) B$ form an orthonormal basis of $\RR^n$. We seek a rational matrix $M = \Diag(0, \l_2, \ldots, \l_n)$ with $\l_i > 0 $ and $\l_i = \l_j$ if and only if $\phi_i$ and $\phi_j$ lie in the same part of the partition so that $L = \tilde{B}^\top M \tilde{B}$ is the Laplacian of a positively weighted graph. 

Assuming such an $M$ exists, $A =  \Diag(\tilde{B}^\top M \tilde{B}) - \tilde{B}^\top  M \tilde{B}$ is then the adjacency matrix of the desired graph $G$, which has Laplacian eigenspaces $\L_1, \ldots, \L_m$ where $\L_i$ is spanned by $\pi_i$.  We first check several properties of this construction.
\begin{enumerate}
    \item $L $ is positive semidefinite: We see that $ L = (M^{1/2}\tilde{B})^\top  (M^{1/2} \tilde{B})$.
    \item $D =\Diag(\tilde{B}^\top M\tilde{B}) $ is the degree matrix of $G$:  Since $\tilde{B}^\top M \tilde{B}\ones = 0$, $D $ satisfies $D_{ii} = (\tilde{B}^\top M \tilde{B})_{ii} = -\sum_{j\neq i} (\tilde{B}^\top M \tilde{B})_{ij} =  \sum_{j\neq i} A_{ij} = \deg(i)$. 
    \item $G$ is connected: By construction, the multiplicity of 0 as an eigenvalue is 1. 
    \item $G$ has rational weights. For $i \neq j,$  
    \begin{align*}
 w_{ij} = -L_{ij} = -\sum_{k = 2}^n \frac{\phi_k(i)\phi_k(j)\lambda_k}{\phi_k^\top \phi_k} \in \QQ\end{align*}    
    \item It is polynomial in $\size(B)$ to compute $A$ from $B$ and $M$. The most expensive operation is multiplication of $n \times n$ matrices.
\end{enumerate}
    
It remains to show that we can find an $M$ with the given properties so that $A \geq 0$. For the rest of this proof, let $\ones \in \RR^{n - 1}$. We will prove that such an $M$ exists by showing that $M = \Diag(0, \ones^\top)$ lies in the interior of the constraints imposed by $A \geq 0$, and can be perturbed to satisfy all the needed conditions. We note that $M = \Diag(0, \ones^\top)$ corresponds to an equally weighted complete graph and solves the problem when $m =2$. 
Since $\lambda_1 = 0$, 
    $$L = \tilde{B}^\top M\tilde{B} = \sum_{k = 1}^n \frac{\lambda_k}{\|\phi_k\|^2} \phi_k\phi_k^\top = \sum_{k = 2}^n \frac{\lambda_k}{\|\phi_k\|^2} \phi_k\phi_k^\top.$$
        Since we require positive edge weights, we need that for $i \neq j$, $$A_{ij} = -L_{ij} = -\sum_{k = 2}^n \frac{\phi_k(i)\phi_k(j)\lambda_k}{\|\phi_k\|^2} \geq 0.$$
    
    This provides a system of $n(n - 1)/2$ linear inequalities constraining the choice of eigenvalues.
    Represent this system as $C\lambda \geq 0$, where $\lambda = (\l_2,\ldots,\l_n)\in \RR^{n - 1}$ and $C \in \RR^{n(n - 1)/2 \times (n - 1)}$.
    We note that $K= \{\lambda \in \RR^{n-1}_{+}: C \l \geq 0\}$ is a polyhedral cone.
    For some $ i\neq j$, let \[c = \left(\frac{-\phi_2(i)\phi_2(j)}{\|\phi_2\|^2 } ,\ldots, \frac{-\phi_n(i)\phi_n(j)}{\|\phi_n\|^2}\right)^\top\]  be a row of $C$. Since $\tilde{B} $ is orthonormal, the columns of $\tilde{B} $ are orthogonal. Therefore,
    \begin{align*}
        0 &= \left(\frac{1}{\sqrt{n} },  \frac{\phi_2(i)}{\|\phi_2\|}, \cdots ,\frac{\phi_n(i)}{\|\phi_n\|}\right) \left(\frac{1}{\sqrt{n} }, \frac{\phi_2(j)}{\|\phi_2\|}, \cdots ,\frac{\phi_n(j)}{\|\phi_n\|}\right)^\top\\
        & = \frac{1}{n} + \sum_{k = 2}^n \frac{\phi_k(i)\phi_k(j)}{\|\phi_k\|^2}
        = \frac{1}{n} - c^\top\ones.
    \end{align*} 
    
    Hence $c^\top\ones = \frac{1}{n} > 0$.
    Since $c$ was an arbitrary row of $C$, we see that $C\ones > 0$, implying that the ray spanned by $\ones$ lies in the interior of $K$.    
    \begin{figure}[h!] 
    \begin{tikzpicture}[scale = .4]
        \draw[fill=blue!10] (4, -2) -- (-6, -2) -- (-4, 1) -- (6, 1) -- cycle
            node[midway,right] {$c^\top \lambda \geq 0$};
        \draw[->] (0, 0) -- (0, 5)
            node[left] {$c$};
        \draw[->] (0, 0) -- (4, 4)
            node[above right] {$\ones$};
        \draw[style=dashed] (0, 0) -- (4, 0);
        \draw[style=dashed] (4, 4) -- (4, 0)
            node[midway,right] {$d_c$};
        \draw[very thin] (0, 1.1) arc (90:45:1.1) 
            node[midway,above] {\small $\theta_c$};
        \draw[very thin] (4, 2.9) arc (270:225:1.1) 
            node[midway,below] {\small $\theta_c$};
        \draw[very thin] (-0.25, 0) -- (-0.25, 0.25) -- (0, 0.25);
        \draw[very thin] (3.75, 0) -- (3.75, 0.25) -- (4, 0.25);
        \filldraw[black] (0,0) circle (2pt);
    \end{tikzpicture}
    \caption{Computing the distance from $\ones$ to $\partial K$}
     \label{fig: computing dist to boundary of cone}
    \end{figure}
    Thus there is an $\varepsilon$-neighborhood of $\ones$ contained in $K$. Let $\theta_c$ be the angle between $\ones$ and $c$. 
    Observe from Figure~ \ref{fig: computing dist to boundary of cone} that the distance between $\ones$ and the hyperplane $c^\top \lambda \geq 0$ is
    $$d_c = \|\ones\|\cos\theta_c = \|\ones\| \cdot \frac{c^\top \ones}{\|c\|\|\ones\|} = \frac{1}{n\|c\|} > 0.$$
    Then such $\varepsilon \in \QQ$ can be computed in strongly polynomial time by rounding down each $d_c$ to two significant digits, taking the minimum over all possible $d_c$'s, and multiplying by some constant $\alpha < 1$ (e.g. $\alpha = 0.99$).
   For $k = 2, \ldots, n$, if $\phi_j\in \pi_k$, define $\lambda_j \coloneqq 1 + \delta_j$, where $\delta_j = \frac{\widetilde{k}\varepsilon}{n}$ and $\widetilde{k}$ is a rational approximation of $\sqrt{k}$ up to the precision needed to distinguish $\sqrt{m-1}$ from $\sqrt{m}$. Finding such an approximation takes time $\cal O(\log(m) + \text{precision})$ using binary search, and the precision depends polynomially on $m < n$.  Note that  $\sum_{k = 2}^m |\pi_k | = |\cal B| - 1 = n-1$. Then,
    \begin{align*}
        \left\|
            \lambda - \ones \right\|
       \leq \frac{\varepsilon}{n}\left(\sum_{k = 2}^m k|\pi_k|\right)^\frac{1}{2} \leq \frac{\varepsilon}{n}\left(n\sum_{k = 2}^m |\pi_k|\right)^\frac{1}{2} = \frac{\varepsilon}{n}\left(n(n - 1)\right)^\frac{1}{2} < \varepsilon.
    \end{align*}
    By construction,  $\l > 0$ since it is in the $\varepsilon$-neighborhood of $\ones$, and $\l \in \QQ^{n-1}$ satisfies the requirements imposed by the given eigenspace partition. Thus we have found the desired matrix $M = \Diag(0, \l_2, \ldots, \l_n) \in \QQ^{n\times n}$.
   \end{proof}

For a given orthogonal basis $\cal B$, the cone 
\[K = \{ \l = (\l_2, \ldots , \l_n) \geq 0: C\l \geq 0\}\] 
in the proof of Lemma~\ref{lem: ONB to graph} indexes all possible positively weighted graphs with $\cal B$ as its eigenvectors. 
The graph defined by $\l$ is missing edges if and only if $\l$ is in the boundary of $K$, and the facial structure of $K$ indexes the possible sparsity patterns of positively weighted graphs with these eigenvectors.

Furthermore, the \emph{braid arrangement} (see \cite[Example 3.11.11]{StanleyEC1}) partitions $K$ into chambers corresponding to the order of eigenvalues, which we depict for $n = 4$ in Figure~\ref{fig:braid arrangement}. Using nonstandard notation to be compatible with the notation of $K$, recall that the braid arrangement in $\RR^n$ is the collection of hyperplanes  
\[ \cal A_n = \{ H_{ij} = \{ \l = (\l_2, \ldots, \l_{n+1}) : \l_i = \l_j \} \}_{2 = i < j \leq n+1} .\]
Points lying on $H_{ij}$ correspond to graphs where the eigenspaces collapse into each other. As more hyperplanes of $\cal A_n$ intersect, an eigenvalue has higher multiplicity. 

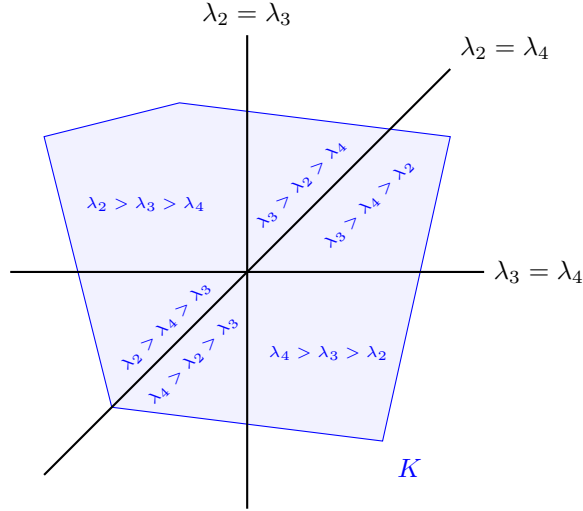
\begin{figure}[h] 
    \centering
    \begin{tikzpicture}[scale = .9]
         \draw[blue, fill=blue!5] (-2, -2) -- (-3, 2) -- (-1, 2.5) -- (3, 2) -- ( 2,-2.5) -- cycle;
        \draw[thick] (0, -3.5) -- (0, 3.5)
            node[above] {$\l_2 = \l_3$};
        \draw[thick] (-3, -3) -- (3, 3)
            node[above right] {$\l_2 = \l_4$};
        \draw[thick] (-3.5, 0) -- (3.5, 0)
            node[right] {$\l_3 = \l_4$};
        \node[blue] at (2.4, -2.9) {$K$};
        \node[blue] at (-1.5, 1) {\tiny $\l_2 > \l_3 > \l_4$};
        \node[blue, rotate = 45] at (.8, 1.3) {\tiny $\l_3 > \l_2 > \l_4$};
        \node[blue, rotate = 45] at (1.8, 1) {\tiny $\l_3 > \l_4 > \l_2$};
        \node[blue] at (1.2, -1.2) {\tiny $\l_4 > \l_3 > \l_2$};
        \node[blue, rotate = 45] at (-.8, -1.3) {\tiny $\l_4 > \l_2 > \l_3$};
        \node[blue,rotate = 45] at (-1.2, -.8) {\tiny $\l_2 > \l_4 > \l_3$};
    \end{tikzpicture}
    \caption{A cross-section of the braid arrangement $\cal A_3$ intersecting the cone $K$ of eigenvalues. The center point corresponds to $\l = \ones$, which defines the unweighted complete graph.}
    \label{fig:braid arrangement}
\end{figure}

We now use Lemma~\ref{lem: ONB to graph} to show that given a polytope $P$, we can construct a positively weighted graph in strongly polynomial time which has an eigenpolytope that is affinely equivalent to $P$.

\begin{theorem}
\label{lem: embed polytope algo}
Let $\cal V = \{v_1,\ldots, v_n\} \subset \QQ^d$ be such that $P  = \conv(\cal V)$ is full dimensional. We can create a connected graph $G= ([n], E, w)$ with $w \in \QQ^E_{>0}$ which has a Laplacian eigenpolytope affinely equivalent to $P$ in time polynomial in $\size(\cal V).$
    \end{theorem} 
    
\begin{proof}
  We may translate $P$ so that its centroid lies at the origin by mapping $v_i \mapsto v_i - \frac{1}{n} \sum_{i=1}^n v_i$. This is an affine transformation, and 
\[ \sum_{i=1}^n \left (v_i - \frac{1}{n} \sum_{i=1}^n v_i \right) = \sum_{i=1}^n v_i - n \frac{1}{n} \sum_{i=1}^n v_i = 0. \]
Thus, we may assume without loss of generality that $ \sum_{i=1}^n v_i =0$, or equivalently, that the rows of the matrix $ V = [ v_1 \ldots v_n ]$ are orthogonal to $\ones.$ 
Let $ X = [x_1\,\ldots\, x_n ]\in \QQ^{(n-d-1) \times n}$ have rows which form a basis for the kernel of the matrix $ [ \ones \,\,  V^\top]^\top$.  We can find $X$ in strongly polynomial time using Gaussian elimination (see \cite[Theorem 23.3]{SchrijverLPIPbook}).  
Let \[ \hat B =  \begin{bmatrix}
\ones^\top \\
V\\
X 
\end{bmatrix}. \]
We will think of the rows of $\hat B$ as the Laplacian eigenvectors of the graph we will construct. 
Perform Gram-Schmidt orthogonalization on the rows of $\hat B$ to obtain a rational orthogonal matrix 
\[B = \begin{bmatrix} \ones & \phi_2 & \ldots & \phi_n 
\end{bmatrix}^\top. \]
Gram-Schmidt orthogonalization is strongly polynomial time (see \cite[Section 1.4]{GrotschelLovaszSchrijverBook})
Note that $\spanset \{\phi_2,\ldots, \phi_{d+1}\} = \rowspan V$ because Gram-Schmidt sequentially preserves span. 
By Lemma~\ref{lem: ONB to graph}, we can construct a graph with rational, positive edge weights with the following three eigenspaces in strongly polynomial time: $\L_1 = \spanset\{\ones\},  \L_2 = \spanset\{\phi_2, \ldots, \phi_{d+1}\}, \L_3 = \spanset\{\phi_{d+2}, \ldots, \phi_{n}\}. $  
Because $V$ and $[ \phi_2 \ldots, \phi_{d+1}]^\top$ differ only by a change of basis, the eigenpolytope $P_{\{2\}}$ is affinely equivalent to $P = \conv (\cal V)$. 
\end{proof}

\begin{corollary}
Up to affine equivalence, every polytope with $n$ vertices appears as the eigenpolytope of a positively weighted graph on $n$ vertices.
\end{corollary}

\begin{example}
We illustrate Theorem~\ref{lem: embed polytope algo} by embedding the 16-cell, also known as the 4-dimensional cross polytope, as the eigenpolytope of a connected, positively weighted graph. A centered, orthogonal embedding of the 16-cell's vertices is
\[ V = \begin{bmatrix}
1 & 0 & 0 & 0 & -1 & 0 & 0 & 0 \\
0& 1 & 0 & 0 & 0 & -1 & 0 & 0 \\
0& 0& 1 & 0 & 0 & 0 & -1 & 0\\
0& 0& 0& 1 & 0 & 0 & 0 & -1 
\end{bmatrix}. \]
We next compute an orthogonal basis for the kernel of $[\ones, V^\top ]^\top$:
\[ X = \begin{bmatrix}
    -1&1& 0&  0&   -1& 1&  0&    0\\        
    -1& -1&  2  &    0&    -1&     -1&    2&   0\\
 - 1& -1& - 1& 3&  - 1&   - 1&  - 1&  3
\end{bmatrix}. \]
We set the eigenspace partition $\L_1 = \spanset\{ \ones\} ,\L_2 = \rowspan V, \L_3 = \rowspan X$. Following the procedure in Lemma~\ref{lem: ONB to graph}, we compute the following cone of eigenvalues for which the resulting graph will be positively weighted.
\[ K  = \left\{ (\l_2, \l_3) : \begin{bmatrix}
0 & 1/8 \\
1/2 & -3/8 
\end{bmatrix}  \begin{bmatrix}
\l_2\\
\l_3
\end{bmatrix} \geq 0 \right\}\]
A valid choice of eigenvalues is $\l_2 = 20$, $\l_3 = 24$. We depict the resulting complete weighted graph in Figure \ref{fig: our embedding of 16cell}.

\begin{figure}[h]
    \centering
    \begin{tikzpicture}[scale =.5]
\tikzstyle{bk}=[fill = white,circle, draw=black, inner sep=1.5 pt]
 \foreach \y in {1,2,3,4,5,6,7,8}
        {\node at (45*\y +45:3) (a\y) [bk] {\y};
        }
\draw[thick] (a1) -- (a2) -- (a3) -- (a4) -- (a5) -- (a6) -- (a7) -- (a8)  --(a1);
\draw[thick] (a1) -- (a3) -- (a5) -- (a7) -- (a1);
\draw[thick] (a2) -- (a4) -- (a6) -- (a8) -- (a2);
\draw[thick] (a1) -- (a4) -- (a7) -- (a2) -- (a5) -- (a8) -- (a3) -- (a6) -- (a1);
\draw[dashed] (a1) -- (a5) ;
\draw[dashed] (a2) -- (a6) ;
\draw[dashed] (a3) -- (a7) ;
\draw[dashed] (a4) -- (a8) ;
\end{tikzpicture}
    \caption{A graph which has the 16-cell as an eigenpolytope. Solid edges have weight 3, dashed edges have weight 1.}
    \label{fig: our embedding of 16cell}
\end{figure}
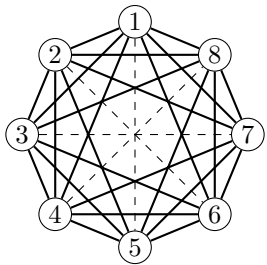

The algorithm in Theorem \ref{lem: embed polytope algo} embeds a polytope as an eigenpolytope using eigenvalues from the interior of the cone $K$, so the resulting graph will always be dense. To find a non-complete graph with the given eigenpolytope, one needs to find eigenvalues lying in the boundary of the cone $K$. We will use this idea to show that the edge graph of the 16-cell has the 16-cell as an eigenpolytope, a result due to \cite[Theorem 4.3]{Godsil_DistReg}.  The first inequality of $K$ ($\l_3/8 \geq 0$) disconnects the graph at equality. However, $ \l_2/2 -3\l_3/8 = 0$ has positive solutions, and    \[  \begin{bmatrix}
0 & 1/8 \\
1/2 & -3/8 
\end{bmatrix}  \begin{bmatrix}
6\\
8
\end{bmatrix} = \begin{bmatrix}
1\\
0
\end{bmatrix} ,\]
shows that $(6,8) \in K$. Thus the graph defined by the Laplacian \[L  = \begin{bmatrix}
    \ones  &  V^\top & X^\top
\end{bmatrix}  
\Diag( 0,6,6,6,6,8,8,8)
\begin{bmatrix}
    \ones^\top
    \\ V
    \\ X
\end{bmatrix}^{-1}
\]  is not dense and has the 16-cell as an eigenpolytope.  Working out the computations shows that this graph is truly the graph of the 16-cell, depicted in Figure~\ref{fig: 16 cell}.

\begin{figure}
    \centering
   \begin{tikzpicture}[scale =.5]
\tikzstyle{bk}=[fill= white,circle, draw=black, inner sep=1.5 pt]
 \foreach \y in {1,2,3,4,5,6,7,8}
        {\node at (45*\y +45:3) (a\y) [bk] {\y};}
\draw[thick] (a1) -- (a2) -- (a3) -- (a4) -- (a5) -- (a6) -- (a7) -- (a8)  --(a1);
\draw[thick] (a1) -- (a3) -- (a5) -- (a7) -- (a1);
\draw[thick] (a2) -- (a4) -- (a6) -- (a8) -- (a2);
\draw[thick] (a1) -- (a4) -- (a7) -- (a2) -- (a5) -- (a8) -- (a3) -- (a6) -- (a1);
\end{tikzpicture}
    \caption{The graph of the 16-cell,  for which the 16-cell is an eigenpolytope.}
    \label{fig: 16 cell}
\end{figure}
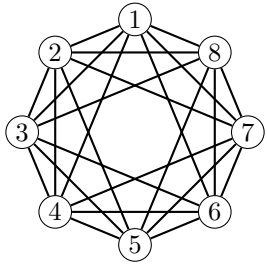

\end{example}

We show in Example~\ref{ex: cant expect unweighted graphs} that not every polytope may be embedded as the eigenpolytope of an unweighted graph on $n$ vertices.  The flexibility to use positive edge weights is crucial for this universality result.   

\begin{example} \label{ex: cant expect unweighted graphs}
    Up to affine equivalence, the quadrilateral
    \[P = \conv\left\{ \begin{bmatrix}
        0  \\
        0
    \end{bmatrix}, \begin{bmatrix}
        0  \\
        1
    \end{bmatrix},
    \begin{bmatrix}
        1  \\
        0
    \end{bmatrix},
    \begin{bmatrix}
        1  \\
        2
    \end{bmatrix}\right\}\]
    is not an eigenpolytope of any unweighted graph with four vertices. Following the procedure of Theorem~\ref{lem: embed polytope algo}, we find the following orthogonal basis of $\RR^4$, where the middle two rows correspond to the polytope: 
    \[B =  \begin{bmatrix}
    1& 1 &1& 1 \\ \hline
    -1& -1 &1& 1 \\
    -1& 1 &-2& 2 \\ \hline
    2& -2 &-1& 1 
    \end{bmatrix}.\]

Following Lemma~\ref{lem: ONB to graph}, any graph with this polytope as an eigenpolytope has 
\[L = B^\top \Diag(1/2, 1/2 ,1/10, 1/10)\Diag( 0,  a , b,  c) B,\]
where $a,b,c > 0$ and $a,b \neq c$, but we allow the possibility of $a = b$. 
Unraveling the multiplication, four unique expressions describe the edge weights of this graph. 
\begin{align*}
    a/4 -b/10 -2c/5 & && \text{edge }(1,2) \\
      -a/4 +b/5 -c/5 & && \text{edges }(1,3) ,(2,4)\\ 
      -a/4 -b/5 +c/5& && \text{edges }(1,4),(2,3) \\
       a/4 -2b/5 -c/10 &  && \text{edge }(3,4)
\end{align*}
    We check using MATLAB\cite{MATLAB} that for each $v \in \{0, -1\}^4$, the linear system 
    \[ \begin{bmatrix}
        1/4& -1/10 &-2/5 \\
         -1/4 &1/5& -1/5 \\
          -1/4 &-1/5 &1/5\\
            1/4& -2/5 &-1/10 
    \end{bmatrix} \begin{bmatrix}
        a \\
        b\\
        c
    \end{bmatrix} = v\]
    is either inconsistent or forces $ b=c$. Thus no unweighted graph has an eigenpolytope which is affinely equivalent to this polytope.
\end{example}

\begin{remark}
   To end this section, we contrast Theorem~\ref{lem: embed polytope algo} with  related results on \emph{Colin de Verdi{\`e}re matrices} (\cite{CdeVOriginal}, see also \cite{CdeVSurvey}). 
A Colin de Verdi{\`e}re matrix for an unweighted graph $G = ([n], E)$ is any matrix of the form $ M = \delta - \hat A$, where 
\begin{enumerate}
    \item $\hat A$ is the weighted adjacency matrix of a positively weighted graph $\hat G = ([n], E, w)$ on the same edge set as $G$, 
    \item $\delta$ is an arbitrary diagonal matrix, and
    \item $M$ has exactly one negative eigenvalue.
\end{enumerate}
By definition, $M$ is not positive semidefinite and need not have row sums equal to one.
For our purposes, we can think of these matrices as something like a Laplacian on a graph with not only positive edge weights, but also (arbitrary) vertex weights.  
Lov{\'a}sz and Schrijver show in \cite{CdeVLovaszSchrijver} that any 3-dimensional polytope is an eigenpolytope of its edge graph for every Colin de Verdi{\`e}re matrix with corank three. 
Izmestiev extends this construction in \cite{Ismestiev} to show that every polytope is an eigenpolytope of its edge graph for a specifically constructed Colin de Verdi{\`e}re matrix. 
These universality results for eigenpolytopes of Colin de Verdi{\`e}re matrices guarantee a serious amount of sparsity in the graphs needed to represent a given polytope as an eigenpolytope.  
We note, however, that $M$ is not exactly a Laplacian, and the underlying graph has arbitrary vertex weights. To find the specific Colin de Verdi{\`e}re matrix in Izmestiev's construction requires computing the volume of a polytope, which is a \#P-hard problem \cite{DyerFrieze}. Our construction generally produces a dense graph, but we do not require vertex weights, the matrix we use is the true combinatorial graph Laplacian, and the algorithm is strongly polynomial time. 
 \end{remark}

\section{Graphical Designs} \label{sec: design definitions}

A \emph{graphical design}\cite{graphdesigns} on $G= ([n], E, w)$ is a proper subset of graph vertices $S\subset [n]$ on which the global averages of certain graph eigenvectors agree with the weighted averages over just the subset $S$. If $\phi$ is such an eigenvector, that means we seek $S\subset [n]$ and quadrature weights $a_s\in \RR, \, s\in S$ such that 
\[ \frac{1}{n} \sum_{i=1}^n \phi(i) =   \sum_{s\in S} a_s \phi(s).\]
The \emph{support} of a vector $a \in \RR^n$ is $\supp(a) := \{i \in [n] \,:\, a_i \neq 0 \}$. 
A graphical design $S$ with weights $a_s$ may be identified with the vector $a \in \RR^n$ where $S = \supp(a)$.  We gloss over some motivation and explanation for the formal definition in Definition~\ref{def:graphical designs}. We refer those seeking more detail to \cite{designsGaleDuality}, where a nearly identical story is developed for regular unweighted graphs using the Laplacian $AD^{-1}$.

We recall our previous notation and establish some more.  For a given graph $G$ with $m$ eigenspaces, we denote a fixed arbitrary ordering of the eigenspaces as $ \L_1 = \spanset\{\ones\} < \ldots < \L_m$. The rows of the matrix $U$ form an eigenbasis of $L$, and for $I \subset [m]$, we use $U_{I}$ to denote the submatrix of $U$ consisting of all rows corresponding to the eigenspaces $\L_i$ for $i \in I$. Lastly, $\cal U_I$ denotes the collection of columns of $U_I$.  We use $\boldsymbol k$ to denote the particular index set $\{2,\ldots, k\} \subset [m]$. For a subset $S \subset [n]$, define $\ones_S$ by $\ones_S(i) = 1$ if $i \in S$ and 0 otherwise.

\begin{definition}[$k$-graphical designs: see \cite{graphdesigns,cubescodes,designsGaleDuality}]
\label{def:graphical designs}
Suppose $G= ([n],E,w)$ has eigenspaces $  \L_1 =\spanset \{\ones\}< \ldots < \L_m.$
\begin{enumerate}
    \item A \emph{weighted $k$-graphical design} of $G$ is a subset $S \subset [n]$ and real weights $(a_s \neq 0: s\in S)$ such that $U_{\boldsymbol k}a = 0$.
    \item If additionally $ a\geq 0 $, we call $S$ a \emph{positively weighted $k$-graphical design}.
    \item If  $a=\ones_S$ then $S$ is a \emph{combinatorial $k$-graphical design}.
\end{enumerate}
\end{definition}

Graphical designs were first introduced by Steinerberger in \cite{graphdesigns}, which considered averaging eigenvectors, not entire eigenspaces, of the operator $AD^{-1} - I$. The issue of eigenspace multiplicity was left open; the first author resolved this problem in \cite{cubescodes} by amending the definition to consider entire eigenspaces of $AD^{-1} - I$, but investigated only combinatorial $k$-graphical designs. The definition that appears above is a slight generalization of \cite{designsGaleDuality}[Definition 2.2], which considered only unweighted regular graphs and the operator $AD^{-1}$. For regular graphs, $AD^{-1} $, $D-A$, and most other graph operators carry equivalent spectral information because the matrices differ by only an affine transformation.

We only consider graphical designs up to scaling. If $U_{\boldsymbol k}a = 0$, then any scaling of $a$ is also in the kernel of $U_{\boldsymbol k}$. For this reason we leave $\L_1$ out of the definition, as any vector $a$ can be scaled to average this eigenspace.
We also note that no proper subset is an $m$-graphical design -- this follows by the same argument as in \cite[Lemma 2.5]{designsGaleDuality}.   Though $\ones$ is always a $k$-graphical design for all $k\in [m]$, the purpose of a graphical design is, in a sense, to compress data on a graph, so we restrict our attention to proper subsets of graph vertices.

We will often abbreviate $k$-graphical designs as $k$-designs.  
We note that there are two kinds of weights at play in our setup: the edge weights $w$ of the graph, and the quadrature weights $a_s$ which define the design.  Here, we will only consider positively weighted or combinatorial designs. Quadrature rules with negative weights are typically undesirable, as they can lead to nonconvergence and instability \cite{HuybrechsInstability}.

\begin{example} \label{ex: running ex pt2}
Consider the graph from Example~\ref{ex: runing ex 1} shown in Figure~\ref{fig: running ex} with eigenspaces ordered as labeled.
The quadrature weights $$a_1 =a_3 = .0342,\, a_2 = .1111, \, a_6 = .5328, \, a_8 = .2876,$$ show that $S = \{1,2,3,6,8\}$ averages the basis vectors given for $\L_1, \L_2, \L_3, \L_4$, which is to say that $U_{\{2,3,4\}} a = 0$. Thus these vertices with these weights form a positively weighted $4$-graphical design, shown in Figure~\ref{fig: running ex 4design} along with the full matrix $U_{\{2,3,4\}}$. In this ordering of the eigenspaces, there are no combinatorial 4-designs. \end{example}

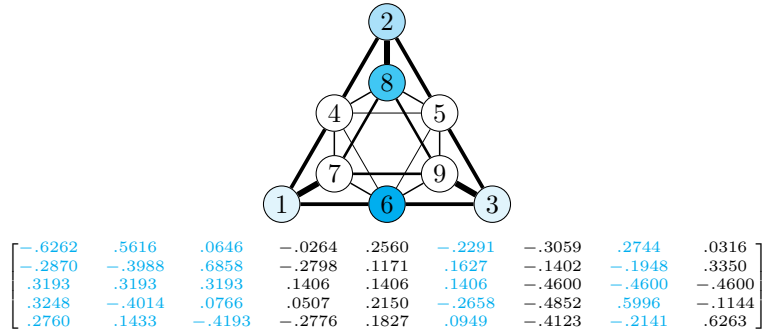
\begin{figure}[h!]
\begin{center}
\begin{tikzpicture}[baseline={($ (current bounding box.west) - (0,1ex) $)},scale = .7]
    \tikzstyle{bk}=[circle, fill = white,inner sep= 2 pt,draw]
    \tikzstyle{most}=[circle, fill = cyan,inner sep= 2 pt,draw]
    \tikzstyle{half}=[circle, fill = cyan!60,inner sep= 2 pt,draw]
    \tikzstyle{quarter}=[circle, fill = cyan!30,inner sep= 2 pt,draw]
    \tikzstyle{little}=[circle, fill = cyan!10,inner sep= 2 pt,draw]
\node (v1) at (-2, 0) [little] {1};
\node (v2) at (0, 3.464)  [quarter] {2};
\node (v3) at (2, 0) [little] {3};
\node (v4) at (-1, 1.732) [bk] {4};
\node (v5) at (1, 1.732) [bk] {5};
\node (v6) at (0, 0) [most] {6};
\node (v7) at (-1, 0.577) [bk] {7};
\node (v8) at (0, 2.309) [half] {8};
\node (v9) at (1, 0.577) [bk] {9};
\draw[line width=0.5mm] (v1) -- (v4) -- (v2) -- (v5) -- (v3) -- (v6) -- (v1);
\draw[line width=0.2mm]  (v4) -- (v8) -- (v5) -- (v9) -- (v6) -- (v7) -- (v4);
\draw[line width=0.03mm]  (v4) -- (v5) -- (v6) -- (v4);
\draw[line width=0.35mm]  (v7) -- (v8) -- (v9) -- (v7);
\draw[line width=0.8mm]  (v1) -- (v7);
\draw[line width=0.8mm]  (v2) -- (v8);
\draw[line width=0.8mm]  (v3) -- (v9);
\end{tikzpicture} \\
\tiny{\[\begin{bmatrix} 
\textcolor{cyan}{-.6262} & \textcolor{cyan}{.5616} & \textcolor{cyan}{.0646} & -.0264 & .2560 & \textcolor{cyan}{-.2291} & -.3059 & \textcolor{cyan}{.2744} & .0316 \\
\textcolor{cyan}{ -.2870} & \textcolor{cyan}{-.3988} & \textcolor{cyan}{.6858} & -.2798 & .1171 & \textcolor{cyan}{.1627} & -.1402 & \textcolor{cyan}{-.1948} & .3350 \\
\textcolor{cyan}{ .3193} & \textcolor{cyan}{.3193} & \textcolor{cyan}{.3193} & .1406 & .1406 & \textcolor{cyan}{.1406} & -.4600 & \textcolor{cyan}{-.4600} & -.4600 \\
 \textcolor{cyan}{.3248} & \textcolor{cyan}{-.4014} & \textcolor{cyan}{.0766} & .0507 & .2150 & \textcolor{cyan}{-.2658} & -.4852 & \textcolor{cyan}{.5996 }& -.1144 \\
\textcolor{cyan}{ .2760} & \textcolor{cyan}{.1433 }&\textcolor{cyan}{ -.4193} & -.2776 & .1827 & \textcolor{cyan}{.0949} & -.4123 & \textcolor{cyan}{-.2141} & .6263 \\
\end{bmatrix}\]}
\end{center}

\caption{A positively weighted 4-graphical design on a positively weighted graph. A more saturated color on a vertex indicates a larger vertex weight in the design. The matrix $U_{\{2,3,4\}}$ is also shown, with the relevant columns highlighted in cyan.}
\label{fig: running ex 4design}
\end{figure}

\section{A Bijection through Gale Duality} \label{sec: gale duality}

We now use Gale duality to connect graphical designs and eigenpolytopes, extending the results in \cite{designsGaleDuality} for unweighted regular graphs to positively weighted graphs without regularity assumptions. We begin with a brief introduction to Gale duality \cite{GaleOriginal}; see \cite[Chapter 6]{Ziegler} or \cite[Section 5.4]{GrunbaumBook} for more details.

A \emph{vector configuration} $\cal V  =(v_1, \ldots, v_n) $ is a collection of real vectors with possible repetitions. We denote the matrix $ [v_1 \, \ldots \, v_n ]$ by $V$.
A {\em dependence} on $\cal V$ is  a vector $a\in \RR^n$ such that $Va = 0$.  A support-minimal dependence is called a \emph{circuit} of $\mathcal{V}$.

\begin{theorem}[Gale duality: see \cite{Ziegler, GrunbaumBook}] \label{thm: Gale duality} Suppose  $$\cal V = (v_1, \ldots, v_n) ,\,\, v_i \in \RR^{n-d-1} \textup{ and } \cal V^*=  (v^*_1, \ldots, v^*_n), \,\,v_i^* \in  \RR^{d+1}$$ are vector configurations satisfying the following properties.
\begin{enumerate}
    \item $V $ and $V^*$ are full rank matrices.
    \item $V(V^*)^\top =0$
    \item $\ones$ is the first row of $V^*$.
\end{enumerate} 
Then, for any $I \subseteq  [n]$,  $ \conv \{v_i^\ast: i \in [n] \setminus I\}$ is a face of $P^* = \conv(\mathcal{V}^\ast)$ if and only if $0$ is in the relative interior of $\conv\{v_i: i \in I\}$.
\end{theorem}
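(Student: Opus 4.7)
The plan is to show that both sides of the biconditional are equivalent to the existence of a vector $a \in \RR^n$ satisfying $Ua = 0$, $a_i > 0$ for $i \in I$, and $a_i = 0$ for $i \in [n]\setminus I$.

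First I would establish the algebraic duality between dependences on $\mathcal{U}$ and linear functionals on $\mathcal{U}^*$. Since $U$ has rank $d$ and $U^*$ has rank $n-d$ by (1), and the two row spaces are orthogonal by (2), they are orthogonal complements in $\RR^n$. Hence $\{a \in \RR^n : Ua = 0\}$ is exactly the row space of $U^*$, so $a$ is a dependence on $\mathcal{U}$ if and only if there exists $z \in \RR^{n-d}$ with $a_i = \langle z, u_i^* \rangle$ for every $i$.

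Next I would translate the face condition into a dependence condition, with property (3) doing the decisive work. The set $\conv\{u_i^* : i \in [n]\setminus I\}$ is a face of $P^*$ precisely when there exist $y \in \RR^{n-d}$ and $c \in \RR$ with $\langle y, u_i^* \rangle = c$ for $i \notin I$ and $\langle y, u_i^* \rangle < c$ for $i \in I$. Letting $e_1 \in \RR^{n-d}$ denote the first standard basis vector, property (3) gives $\langle e_1, u_i^* \rangle = 1$ for every $i$; so setting $z := c e_1 - y$ produces $a_i := \langle z, u_i^* \rangle$ equal to $0$ on $[n]\setminus I$ and strictly positive on $I$. By the previous paragraph $a$ is a dependence on $\mathcal{U}$, and since every step is reversible, faces supported on $[n]\setminus I$ correspond bijectively to dependences on $\mathcal{U}$ with the prescribed sign pattern.

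Finally I would note that $0$ lies in the relative interior of $\conv\{u_i : i \in I\}$ iff there exist strictly positive weights $\lambda_i > 0$ for $i \in I$ with $\sum_{i \in I} \lambda_i u_i = 0$; the normalization to $\sum_i \lambda_i = 1$ costs nothing by rescaling since $\sum_i \lambda_i > 0$. Extending $\lambda$ by zeros outside $I$ yields exactly the positive-supported dependence built above, closing the loop. The subtle point I expect to be the main obstacle is the strict-versus-non-strict bookkeeping: the face must coincide \emph{exactly} with $\conv\{u_i^* : i \in [n]\setminus I\}$ and not be larger, which is precisely why we need $a_i > 0$ (not merely $a_i \geq 0$) on $I$, and why property (3) is essential, since without it one cannot absorb the constant $c$ into the linear functional $z$ while remaining in the row space of $U^*$.
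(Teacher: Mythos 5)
The paper does not prove Theorem~\ref{thm: Gale duality}; it is cited as classical with references to \cite{GaleOriginal}, \cite{Ziegler}, and \cite{GrunbaumBook}. There is therefore no in-text argument for me to compare against, so I judge your proof on its own terms.

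Your argument has the right skeleton and is essentially the standard proof. Identifying $\ker U$ with the row space of $U^*$ via properties (1)--(2), using property (3) to absorb the affine offset $c$ into a linear functional on $\mathcal{U}^*$ (so that a supporting hyperplane for a face of $P^*$ becomes a dependence on $\mathcal{U}$ with the prescribed sign pattern), and translating $0 \in \operatorname{relint}\conv\{u_i : i \in I\}$ into the existence of strictly positive weights on $I$ are all correct and correctly placed. You have also correctly identified property (3) as the device that makes the affine and linear pictures coincide.

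One step needs sharpening. You assert that $\conv\{u_i^* : i \in [n]\setminus I\}$ being a face of $P^*$ is \emph{equivalent} to the existence of $(y,c)$ with $\langle y, u_i^*\rangle = c$ for $i\notin I$ and strictly $< c$ for $i \in I$. The reverse implication is fine, but the forward one can fail when $\mathcal{U}^*$ has repeated or degenerately placed points: a supporting hyperplane for the face may also contain $u_j^*$ for some $j\in I$, and then the induced $a_j$ is $0$, not positive. Your closing paragraph senses that something strict is needed but misdiagnoses it as ``the face must not be larger''; by hypothesis the face \emph{is} $\conv\{u_i^* : i\notin I\}$ and cannot be larger as a set. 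What must actually hold is that $I$ is exactly the complement of $\{i : u_i^* \in F\}$ --- i.e.\ one works with cofaces of the labeled point configuration and proper faces of $P^*$, which is the convention in the standard references. With that convention stated explicitly, your argument closes.
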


For an index set $I \subseteq [n]$, $0$ is in the relative interior of $\conv\{v_i: i \in I\}$ if and only if there exists $c \geq 0$ with $\supp(c) = I$ and $Vc = 0$. Equivalently $c$ is a positive dependence of $\mathcal{V}$ with $\supp(c)=I$. 
Therefore we have the following consequence. 

\begin{corollary}[see {\cite[Theorem~5.4.1]{GrunbaumBook}} ]
\label{cor:faces and dependencies} Let $\cal V$ and $\cal V^*$ be as in Theorem \ref{thm: Gale duality}. 
For any $I \subseteq  [n]$,  $ \conv \{v_i^\ast: i \in [n] \setminus I\}$ is a face (facet) of $P^\ast =\conv(\mathcal{V}^\ast)$ if and only if $I$ is the support of a positive dependence (circuit) of $\mathcal{V}$. 
\end{corollary}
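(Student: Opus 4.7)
The plan is to read the corollary off Theorem~\ref{thm: Gale duality} together with the relative-interior characterization recalled in the sentence immediately preceding the statement. For the face direction, Theorem~\ref{thm: Gale duality} rewrites the condition ``$\conv\{u_i^\ast : i \in [n]\setminus I\}$ is a face of $P^\ast$'' as ``$0$ lies in the relative interior of $\conv\{u_i : i \in I\}$'', and the convex-geometry fact noted in the paper says this in turn is equivalent to the existence of strictly positive coefficients $(c_i)_{i \in I}$ with $\sum_{i \in I} c_i u_i = 0$. Padding by zeros on $[n]\setminus I$ yields a vector $c \in \RR^n_{\geq 0}$ with $Uc = 0$ and $\supp(c) = I$, i.e.\ a positive dependence on $\mathcal{U}$ with support exactly $I$. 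Reading these equivalences in either direction settles the face half.

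For the facet half, I would use that the Gale correspondence $I \leftrightarrow F_I := \conv\{u_i^\ast : i \in [n]\setminus I\}$ is inclusion-reversing: $F_I \subseteq F_J$ iff $J \subseteq I$. Since facets of $P^\ast$ are its maximal proper faces, they correspond under the bijection to index sets $I$ that are minimal under inclusion among supports of positive dependences of $\mathcal{U}$. The remaining task is to identify such support-minimal positive dependences with the supports of circuits in the sense defined by the paper.

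This last identification is where I expect to do the only real work, since a circuit in the paper's sense is a support-minimal dependence without any sign restriction. I would argue it by a short exchange step: if $c \in \RR^n_{\geq 0}$ is a support-minimal positive dependence with $\supp(c) = I$, and if there were any nonzero dependence $c'$ with $\supp(c') \subsetneq I$, then either $c'$ or $-c'$ has a positive coordinate on $I$. Taking the smallest $t>0$ at which a new coordinate of $c - tc'$ (resp.\ $c + tc'$) vanishes yields a nonnegative dependence whose support is strictly contained in $I$, contradicting the minimality of $c$. Hence every support-minimal positive dependence is already a circuit, and the facet half follows from the face half applied to these minimal supports.
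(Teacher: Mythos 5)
Your proposal is correct and follows the same route as the paper: read the face direction off Theorem~\ref{thm: Gale duality} together with the relative-interior characterization of positive dependences, then descend to minimal supports for the facet direction. The paper presents the corollary as an immediate consequence of that discussion and does not spell out why a support-minimal nonnegative dependence is a circuit in the unsigned sense; your exchange argument (take the smallest $t > 0$ at which $c - tc'$ or $c + tc'$ loses a coordinate, producing a nonnegative dependence of strictly smaller support) is exactly the standard fact needed to close that gap, and it is correct as written.
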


We now have the tools to connect $k$-graphical designs to eigenpolytopes. Recall that $G$ has $m$ eigenspaces, and $\boldsymbol{k} = \{2, \ldots, k\}\subset [m]$. 

\begin{theorem}[See also {\cite[Theorem 3.8]{designsGaleDuality}}] \label{thm: main gale for D-A}
Let $G = ([n], E, w)$ be a connected weighted graph with $m$ eigenspaces $\L_1 < \ldots < \L_m$. 
A set $S  \subset [n]$ is a (minimal) positively weighted $k$-graphical design of $G$ if and only if $[n] \setminus S$ indexes the elements of $\mathcal{U}_{[m] \setminus \boldsymbol{k}}$ which lie on a face (facet) of the eigenpolytope $P_{[m] \setminus \boldsymbol{k}}$.
\end{theorem}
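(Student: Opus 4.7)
The plan is to derive the theorem as a direct consequence of Gale duality (Corollary \ref{cor:faces and dependencies}) applied to the pair of vector configurations $(\mathcal{U}_{\boldsymbol{k}}, \mathcal{U}_{\overline{\boldsymbol{k}}})$. The argument has three parts: first, rewrite the design condition as a positive-dependence condition on $\mathcal{U}_{\boldsymbol{k}}$; second, verify the three hypotheses of Theorem \ref{thm: Gale duality} for $(U_{\boldsymbol{k}}, U_{\overline{\boldsymbol{k}}})$; and third, invoke the corollary to translate positive dependences (respectively, circuits) into faces (respectively, facets) of $P_{\overline{\boldsymbol{k}}}$.

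For the first step, Lemma \ref{lem: def of averaging D-A} already says that $S \subseteq [n]$ with weights $a_s > 0$ is a positively weighted $k$-graphical design of $G$ if and only if there exists $a \in \RR^n$ with $a \geq 0$, $\supp(a) = S$, and $U_{\boldsymbol{k}} a = 0$. This is exactly the statement that $S$ is the support of a positive dependence on $\mathcal{U}_{\boldsymbol{k}}$. Moreover, such an $S$ is \emph{minimal} under inclusion precisely when the corresponding $a$ is a circuit of $\mathcal{U}_{\boldsymbol{k}}$, since removing any $s \in S$ would force a strictly smaller-support positive dependence.

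For the second step, I would check the three hypotheses of Theorem \ref{thm: Gale duality} with the role of $U$ played by $U_{\boldsymbol{k}}$ and $U^*$ by $U_{\overline{\boldsymbol{k}}}$. Because the rows of the full matrix $U$ form an orthogonal basis of $\RR^n$, any subset of rows is linearly independent, so both $U_{\boldsymbol{k}}$ and $U_{\overline{\boldsymbol{k}}}$ have full row rank, and the row counts add up to $n$, matching the dimension requirement in Theorem \ref{thm: Gale duality}. Since $L$ is symmetric, eigenvectors associated to distinct eigenvalues are orthogonal, which gives exactly the identity $U_{\boldsymbol{k}} U_{\overline{\boldsymbol{k}}}^\top = 0$. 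Finally, since $1 \in \overline{\boldsymbol{k}}$ and $\L_1 = \spanset\{\ones\}$, we may (and do) take $\phi_1 = \ones$ so that $\ones^\top$ appears as the first row of $U_{\overline{\boldsymbol{k}}}$.

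With the hypotheses verified, applying Corollary \ref{cor:faces and dependencies} with $I = S$ yields: $S$ is the support of a positive dependence (respectively, a circuit) of $\mathcal{U}_{\boldsymbol{k}}$ if and only if $\conv\{u_i^* : i \in [n] \setminus S\}$ is a face (respectively, a facet) of $P_{\overline{\boldsymbol{k}}}$. Combining this with the reformulation from Lemma \ref{lem: def of averaging D-A} gives the theorem. There is no serious obstacle; the only subtlety is orienting the Gale-duality setup correctly, recognizing that the \emph{complementary} eigenspaces indexed by $\overline{\boldsymbol{k}}$ are the ones whose columns carry the polytope structure, while the eigenspaces to be averaged, indexed by $\boldsymbol{k}$, encode the dependences that witness the design property.
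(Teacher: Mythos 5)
Your proposal is correct and follows essentially the same route as the paper: both proofs reduce the design condition to a positive-dependence condition on $\mathcal{U}_{\boldsymbol{k}}$ via Lemma~\ref{lem: def of averaging D-A}, verify the three Gale-duality hypotheses for the pair $(U_{\boldsymbol{k}}, U_{\overline{\boldsymbol{k}}})$ using orthogonality of eigenspaces, and then invoke Corollary~\ref{cor:faces and dependencies}. You spell out the full-rank and dimension-count checks a bit more explicitly, but the argument is the same.
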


\begin{proof}
By definition, positively weighted $k$-graphical designs of $G$ are positive dependences on the matrix $U_{\boldsymbol {k}}$. 
The matrices $U_{\boldsymbol {k}}$ and $U_{[m] \setminus \boldsymbol{k}}$ satisfy the hypotheses of Corollary \ref{cor:faces and dependencies}: they are full rank, the first row of $U_{[m] \setminus \boldsymbol{k}}$ is the eigenvector $\ones$, and by the orthogonality of eigenspaces, $U_{\boldsymbol {k}} U_{[m] \setminus \boldsymbol{k}}^\top =0.$ 
Therefore the faces (facets) of $P_{[m] \setminus \boldsymbol{k}}$ are dual to the dependences (circuits) of $\cal U_{\boldsymbol {k}}$ in the sense of Gale duality: $S$ indexes the elements of $\cal U_{[m] \setminus \boldsymbol{k}}$ lying on a face (facet) of $P_{[m] \setminus \boldsymbol{k}}$ if and only if $[n] \setminus S $ indexes the support of a (minimal) dependence on $\cal U_{\boldsymbol {k}}$.
\end{proof}
If one allows negative quadrature weights, arbitrary graphical designs are (essentially) in bijection with the complements of hyperplanes slicing through the interior of the corresponding eigenpolytope. This broader story is briefly touched on in \cite[Example 3.18]{designsGaleDuality} and its preceding commentary.

\begin{example}
We illustrate the bijection established in Theorem~\ref{thm: main gale for D-A}.
 Figure~\ref{fig: weighted ex P56} re-exhibits the 4-graphical design, which is a circuit of $\cal U_{\{2,3,4\}}$, along with the Gale dual eigenpolytope $P_{\{1,5,6\}} \simeq P_{\{5,6\}}$.  The cyan facet is dual to the design.
\end{example}

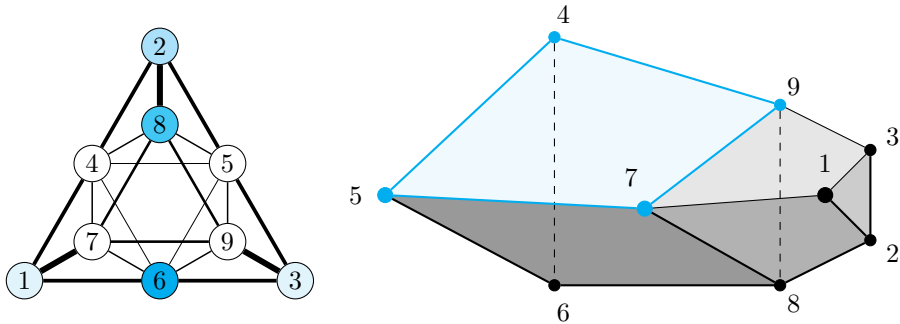
\begin{figure}[h!]
\begin{align*}
 &\begin{tikzpicture}[baseline={($ (current bounding box.west) - (0,1ex) $)},scale = .9]
    \tikzstyle{bk}=[circle, fill = white,inner sep= 2 pt,draw]
    \tikzstyle{most}=[circle, fill = cyan,inner sep= 2 pt,draw]
    \tikzstyle{half}=[circle, fill = cyan!60,inner sep= 2 pt,draw]
    \tikzstyle{quarter}=[circle, fill = cyan!30,inner sep= 2 pt,draw]
    \tikzstyle{little}=[circle, fill = cyan!10,inner sep= 2 pt,draw]
\node (v1) at (-2, 0) [little] {1};
\node (v2) at (0, 3.464)  [quarter] {2};
\node (v3) at (2, 0) [little] {3};
\node (v4) at (-1, 1.732) [bk] {4};
\node (v5) at (1, 1.732) [bk] {5};
\node (v6) at (0, 0) [most] {6};
\node (v7) at (-1, 0.577) [bk] {7};
\node (v8) at (0, 2.309) [half] {8};
\node (v9) at (1, 0.577) [bk] {9};
\draw[line width=0.5mm] (v1) -- (v4) -- (v2) -- (v5) -- (v3) -- (v6) -- (v1);
\draw[line width=0.2mm]  (v4) -- (v8) -- (v5) -- (v9) -- (v6) -- (v7) -- (v4);
\draw[line width=0.03mm]  (v4) -- (v5) -- (v6) -- (v4);
\draw[line width=0.35mm]  (v7) -- (v8) -- (v9) -- (v7);
\draw[line width=0.8mm]  (v1) -- (v7);
\draw[line width=0.8mm]  (v2) -- (v8);
\draw[line width=0.8mm]  (v3) -- (v9);
\end{tikzpicture} &
\begin{tikzpicture}[baseline={($ (current bounding box.west) - (0,1ex) $)},scale = .6]
          \tikzstyle{frontcy}=[circle,draw =cyan, fill=cyan ,inner sep=2pt]
    \tikzstyle{front}=[circle,draw =black, fill=black ,inner sep=2pt]
        \tikzstyle{backcy}=[circle,draw =cyan, fill=cyan ,inner sep=1.5 pt]
  \tikzstyle{back}=[fill,circle, draw=black,fill=black, inner sep=1.5 pt]
        \fill[fill = cyan!5] (0, 3.5) -- (5, 2) --  (2, -.3) -- (-3.75, 0) -- (0, 3.5);
        \fill[fill = gray!20] (7, 1)  -- (5, 2) --  (2, -.3) -- (6, 0) -- (7, 1) ;
            \fill[fill = gray!40] (6, 0) -- (7, -1)  --  (7, 1) -- (6, 0) ;    
           \fill[fill = black!30] (7, -1)  -- (5, -2) --  (2, -.3) -- (6, 0) -- (7, -1) ;
           \fill[fill = black!40]  (2, -.3) -- (-3.75, 0) -- (0,-2) --   (5, -2) -- (2, -.3) ;
\node (v1) at (6, 0) [front]{} ;
\node (v2) at (7, -1)  [back] {};
\node (v3) at (7, 1) [back] {};
\node (v4) at (0, 3.5) [backcy] {};
\node (v5) at (-3.75, 0) [frontcy] {};
\node (v6) at (0, -2) [back] {};
\node (v7) at (2, -.3) [frontcy]{};
\node (v8) at (5, -2) [back] {};
\node (v9) at (5, 2) [backcy] {};
\node (1) at (6, .7) {1};
\node (2) at (7.5, -1.3)  {2};
\node (3) at (7.5, 1.3) {3};
\node (4) at (0.2, 4)  {4};
\node (5) at (-4.4, 0) {5};
\node (6) at (0.2, -2.6)  {6};
\node (7) at (1.7, .4)  {7};
\node (8) at (5.3, -2.4) {8};
\node (9) at (5.3, 2.4) {9};
\draw[thick] (v1) -- (v2) --(v3);
\draw (v7) --(v1) -- (v3) --(v9);
\draw[dashed] (v9) -- (v8);
\draw[thick] (v8) -- (v7);
\draw[thick] (v5) --(v6) -- (v8) -- (v2);
\draw[dashed] (v4) -- (v6);
\draw[thick, cyan] (v4) -- (v9) -- (v7) -- (v5) -- (v4);
\end{tikzpicture}
\end{align*}
\caption{A positively weighted 4-graphical design and its corresponding facet on the Gale dual eigenpolytope. A more saturated color on a graph vertex corresponds to a larger quadrature weight.}
\label{fig: weighted ex P56}
\end{figure}

\begin{remark} \label{rem: any operator with ones will do}
The bijection established in this section remains true if $D-A$ is replaced by any symmetric operator which has $\L_1 = \spanset\{\ones\}$ as an eigenspace; for instance, the symmetric normalized adjacency matrix  $D^{-1/2} A D^{-1/2} $. See \cite{ChungSpectral} for more on various graph Laplacians.
\end{remark}

\begin{remark} 
One could also define graphical designs for positively weighted graphs using the eigenvectors of $AD^{-1}$, however, the framework provided by Gale duality and eigenpolytopes does not immediately apply.  If the graph is not regular, the eigenspaces of $AD^{-1}$ need not be orthogonal, hence the matrices $U_{\boldsymbol {k}}$ and $U_{[m] \setminus \boldsymbol {k}}$ need not satisfy the conditions of Theorem \ref{thm: Gale duality}. 
This can be worked around with some care if one considers the dual configurations given by $U_{\boldsymbol {k}}$ and $U_{[m] \setminus \boldsymbol {k}} D^{-1}$. There are many details to this, which we do not elaborate on here.
\end{remark}

Theorem~\ref{thm: main gale for D-A} provides an upper bound to the size of a minimal $k$-graphical design, similar to \cite[Theorem 3.14]{designsGaleDuality}.

\begin{theorem}\label{thm:general upper bound} 
Let $G = ([n],E,w)$ be a connected, positively weighted graph with eigenspaces
$\Lambda_1 < \cdots < \Lambda_m$. 
For every $k \in [m-1]$ there is a positively weighted $k$-graphical design with at most $\sum_{i=1}^k \dim \Lambda_i$ vertices. \end{theorem}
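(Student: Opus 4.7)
The plan is to apply the Gale-dual correspondence from Theorem~\ref{thm: main gale for D-A}: minimal positively weighted $k$-designs $S$ of $G$ correspond to facets $F$ of the eigenpolytope $P_{\overline{\boldsymbol{k}}}$ via $S = [n] \setminus F$. Producing a design of size at most $s_k$ therefore reduces to exhibiting a facet of $P_{\overline{\boldsymbol{k}}}$ with at least $n - s_k$ generators of $\mathcal{U}_{\overline{\boldsymbol{k}}}$ lying on it, and this will follow from computing $\dim P_{\overline{\boldsymbol{k}}}$ together with the elementary fact that any facet of a $d$-dimensional polytope contains at least $d$ affinely independent generators.

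The main step is to show $\dim P_{\overline{\boldsymbol{k}}} = n - s_k$. Since $G$ is connected, $d_1 = 1$, so $\overline{\boldsymbol{k}} = \{1, k+1, \ldots, m\}$ has total eigenspace dimension $1 + (n - s_k)$. The submatrix $U_{\overline{\boldsymbol{k}}}$ is therefore $(n - s_k + 1) \times n$, and because its rows are drawn from an orthogonal eigenbasis it has full row rank, so its columns $\mathcal{U}_{\overline{\boldsymbol{k}}}$ linearly span $\RR^{n - s_k + 1}$. The $\L_1$-row of $U_{\overline{\boldsymbol{k}}}$ is a scalar multiple of $\ones^\top$, so all $n$ columns share a common first coordinate and lie in an affine hyperplane. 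The affine hull thus has dimension exactly $(n - s_k + 1) - 1 = n - s_k$, giving $\dim P_{\overline{\boldsymbol{k}}} = n - s_k$.

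To conclude, observe that for $k \in [m-1]$ we have $s_k < n$, so $P_{\overline{\boldsymbol{k}}}$ has dimension at least one and admits at least one facet $F$. Any such $F$ is a face of dimension $n - s_k - 1$ and equals the convex hull of the generators of $\mathcal{U}_{\overline{\boldsymbol{k}}}$ lying on it, so at least $n - s_k$ of these generators must lie on $F$. By Theorem~\ref{thm: main gale for D-A}, $S := [n] \setminus F$ is then a minimal positively weighted $k$-design with $|S| \leq s_k$. The only place that demands real care is the dimension count, which hinges on two competing effects: the orthonormality of the eigenbasis gives $U_{\overline{\boldsymbol{k}}}$ full row rank (pushing the dimension up), while the $\ones^\top$ row forces the configuration into an affine hyperplane (pulling it down by one), and together these pin $\dim P_{\overline{\boldsymbol{k}}}$ to exactly $n - s_k$.
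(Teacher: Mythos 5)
Your proof is correct and follows essentially the same route as the paper: compute $\dim P_{\overline{\boldsymbol{k}}} = n - s_k$, observe that any facet of a $(n-s_k)$-dimensional polytope must contain at least $n - s_k$ of the generating points, and transfer this through Theorem~\ref{thm: main gale for D-A} to bound the design size by $s_k$. The paper asserts the dimension count in one line, whereas you justify it explicitly via the full-row-rank observation together with the affine-hyperplane constraint coming from the $\ones$ row; this filled-in detail (and the check that $s_k < n$ so a facet exists) is worth having, but the underlying argument is the same.
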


\begin{proof}
Because $\dim (P_{[m]\setminus{\boldsymbol k}}) = \sum_{i=k+1}^m \dim \Lambda_i, $ any facet of $P_{[m]\setminus{\boldsymbol k}}$ has at least $\sum_{i=k+1}^m \dim \Lambda_i$ distinct vertices. By Theorem~\ref{thm: main gale for D-A}, any minimal positively weighted $k$-design then has at most $n- \sum_{i=k+1}^n \dim \Lambda_i =  \sum_{i=1}^k \dim \Lambda_i$ vertices.
\end{proof} 

We note that this bound may be tight. The example of an unweighted regular graph and an eigenspace ordering for which this bound is tight for every $k$ given in \cite[Example 3.16]{designsGaleDuality} is also valid here. 

\begin{example} We return to our running example for the last time.
In the ordering specified in Example~\ref{ex: running ex pt2}, Theorem~\ref{thm:general upper bound} states that a minimal positively weighted 4-design has at most $6$ vertices and is dual to a triangular facet of $P_{\{1,5,6\}}$. This eigenpolytope also has  quadrilateral facets, which correspond to designs with $5 < 6$ vertices (see Figure~\ref{fig: weighted ex P56}).
\end{example}

We conclude this section with a few structural remarks about graphical designs that follow from the Gale duality bijection.

\begin{remark}
Fix an ordering $\L_1< \ldots<\L_m$ of a graph $G = ([n],E,w)$. The $i$-th element of $\cal U_{[m]\setminus{\boldsymbol k}}$ lies in the interior of the eigenpolytope $P_{[m]\setminus{\boldsymbol k}}$ if and only if $i$ is in the support of every $k$-graphical design.  
\end{remark}

We recall a quick result about combinatorial designs.

\begin{lemma}[Lemma 2.4 of \cite{designsGaleDuality}]
\label{lem: comb complements}
If $S \subset [n]$ is a combinatorial $k$-design, then so is $[n] \setminus S.$
\end{lemma}

\begin{proof}
Because $ U_{\boldsymbol{k}} \perp \ones$, 
\[   U_{\boldsymbol{k}} \ones_S = 0 \iff   U_{\boldsymbol{k}} (\ones - \ones_S) = 0 \iff  U_{\boldsymbol{k}} \ones_{[n]\setminus S} = 0 \]
\end{proof}

\begin{remark}
 If $S \subset [n]$ is a combinatorial $k$-design, then $S$ and $[n]\setminus S$ partition $P_{[m]\setminus{\boldsymbol k}}$ into two disjoint faces. Therefore, if $G$ has a combinatorial $k$-design, no vertices lie in the interior of $P_{[m]\setminus{\boldsymbol k}}$. 
\end{remark}

We speculate that the symmetry needed for a combinatorial design to exist prevents a graph vertex from being `so important' that it is in every design.

\section{Complexity Results} \label{sec: complexity}

The hardness of some algorithmic problems for graphical designs follows by uniting all of the established theory. We first recall some basics of polytopes; see \cite{GrunbaumBook} or \cite{Ziegler} for more. 
A fundamental result of polyhedral combinatorics is that there are two equivalent definitions of a polytope. 
A polytope $P$ can be written as the convex hull of finitely many points $P = \conv(\{v_1, \ldots, v_n\} )$ for some $v_i \in \QQ^m$, known as a $\cal V$-description. If $v_i \in \QQ^{m}, $ the size of a $\cal V$-polytope is $\size(P) = \sum_{i=1}^n \size(v_i)$.
A polytope can also be written as the bounded intersection of finitely many half-spaces, $P =  \{ x \in \RR^m: Ax \leq b\}$ for some $A \in \RR^{n \times m}$ and $b \in \RR^n$, known as an $\cal H$-description. 
If $A \in \QQ^{n\times m}$ and $b \in \QQ^n$, the size of an $\cal H$-polytope is $\size(P) = \size(A) + \size(b)$.

We will assume $0$ is in the interior of every polytope.
If $0$ is in the interior of $P = \conv( \{v_1,\ldots, v_n\})$, then $P^* := \bigcap_{i=1}^n \{ x: v_i^\top x \leq 1\}$ is also a polytope, known as the \emph{dual polytope} to $P$. We note that $(P^*)^* = P$. 
A $d$-dimensional polytope is \emph{simple} if every vertex is incident to exactly $d$ edges, or equivalently, exactly $d$ facets. We recall that a polytope is \emph{simplicial} if each of its facets is a simplex. Simplicial polytopes and simple polytopes are dual in this sense: if $P = \conv( \{v_1,\ldots v_n\})$ is simple (resp. simplicial), then $P^* = \bigcap_{i=1}^n \{ x: v_i^\top x \leq 1\}$ is simplicial (resp. simple).

 It was established independently in  \cite{ChandrasekaranVertexEnum} and \cite{DyerVertexEnum} that it is NP-complete to determine whether an $\cal H$-polytope is simple. The improvement to strong NP-completeness is due to \cite{FukudaVertexEnum}.

\begin{lemma}[\cite{ChandrasekaranVertexEnum,DyerVertexEnum, FukudaVertexEnum}] \label{lem: H-description simple} The following decision problem is strongly NP-complete.

\textbf{Input:} A rational $\cal H$-polytope $P = \{x: Ax \leq b$\}, $A \in \QQ^{m \times n}$, $b \in \QQ^m$.

\textbf{Question:} Is $P$ nondegenerate (simple)?
\end{lemma}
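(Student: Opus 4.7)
The plan is to establish strong NP-completeness in two steps: place the problem in NP (up to complementation) via a short degeneracy certificate, and then reduce from a strongly NP-complete partition problem.

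For containment, given $P = \{x \in \RR^d : A x \leq b\}$ with $A \in \RR^{m \times d}$, a certificate of non-simplicity is a point $v \in P$ together with an index set $I \subseteq [m]$ of size at least $d+1$ such that $A_{I,*} v = b_I$ and $\rk A_{I,*} = d$. Checking these conditions is polynomial in the encoding size of $(A, b)$, so detecting non-simplicity is in NP. As is customary in polyhedral complexity, we phrase the simplicity decision problem itself as NP-complete via this equivalence with its complement.

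For the hardness reduction, start from an instance $(a_1, \ldots, a_n; T)$ of SUBSET SUM and construct
\[ P \;=\; \{x \in \RR^n : -\ones \leq x \leq \ones,\ a^\top x \leq T\}, \]
an $n$-dimensional polytope described by $2n + 1$ inequalities. The cube vertices $\e \in \{-1, +1\}^n$ have exactly $n$ box constraints tight; such a cube vertex survives in $P$ iff $a^\top \e \leq T$, and becomes degenerate (with $n+1$ tight constraints meeting at a single point of $\RR^n$) iff $a^\top \e = T$. Any other vertex of $P$ must lie on the cutting hyperplane $a^\top x = T$ and combine at most $n-1$ box constraints with that extra inequality, giving exactly $n$ tight constraints and hence non-degeneracy. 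Therefore $P$ fails to be simple iff some sign pattern satisfies $a^\top \e = T$, which via the substitution $\e_i = 2 y_i - 1$ is equivalent to solvability of the original SUBSET SUM instance.

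The main obstacle is upgrading this to \emph{strong} NP-hardness, since the SUBSET SUM reduction above is only pseudo-polynomial in the coefficients $a_i$. To handle this I would replace SUBSET SUM by a strongly NP-complete problem such as 3-PARTITION, and either adapt the hypercube construction to encode the triple splits directly using an assignment polytope with constraints $\sum_j x_{ij} = 1$ and $\sum_i a_i x_{ij} = B$ (split into two inequalities), or chain several SUBSET SUM-style slicings while ensuring all coefficients remain polynomially bounded in the unary input size. The delicate step is verifying that no unintended degenerate vertices arise from the additional constraints, so that the biconditional between polytope degeneracy and partition solvability is preserved; this bookkeeping is essentially what the Fukuda--Liebling--Namiki strengthening contributes beyond the Chandrasekaran--Dyer argument.
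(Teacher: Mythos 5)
The paper does not prove this lemma; it is cited directly from \cite{ChandrasekaranVertexEnum,DyerVertexEnum,FukudaVertexEnum}, with the accompanying text explicitly attributing the basic NP-completeness to the first two references and the strengthening to \emph{strong} NP-completeness to the third. Your attempt reconstructs the weak-hardness half of that literature but leaves a genuine gap precisely where the cited result exceeds what you establish.

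Your SUBSET SUM reduction is the standard one (a cube sliced by a single hyperplane, with a degenerate vertex appearing exactly when the hyperplane passes through a cube corner), and it is essentially correct modulo routine bookkeeping: you should assume $a_i \neq 0$ for all $i$ and verify that $P$ stays full-dimensional so that the dimension count in ``exactly $n$ tight constraints at a non-cube vertex'' is valid. Your placement of the problem via a degeneracy certificate is also fine, though as you note the problem phrased as ``is $P$ simple?'' is naturally in coNP; the convention of conflating a problem and its complement is common here but worth flagging explicitly. The real issue is that the lemma claims \emph{strong} NP-completeness, and the SUBSET SUM reduction produces coefficients that are not polynomially bounded in the unary input size, so it proves only weak hardness. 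Your closing paragraph acknowledges this and proposes a plan (swap in 3-PARTITION, use an assignment-style polytope, verify no spurious degeneracies), but this plan is not carried out, and the verification that no unintended degenerate vertices arise is exactly the nontrivial content that \cite{FukudaVertexEnum} contributes. As written, the proposal proves a weaker statement than the lemma asserts; it does not establish strong NP-completeness.
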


Passing to the dual polytope provides an immediate corollary.

\begin{corollary} \label{cor: V-description simplicial}
The following decision problem is strongly NP-complete.

\textbf{Input:} A rational $\cal V$-polytope $P = \conv(\{v_1,\ldots v_n\}) $, $v_i \in \QQ^m$

\textbf{Question:} Is $P$ simplicial?
\end{corollary}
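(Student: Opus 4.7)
The plan is to give a polynomial-time reduction from the problem in Lemma~\ref{lem: H-description simple} to the problem in the corollary, using polarity. The whole point is that the polar operation sends $\mathcal{H}$-descriptions to $\mathcal{V}$-descriptions of exactly the right size, and swaps simplicity with simpliciality.

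Concretely, I would start from an instance $P = \{x \in \RR^d : Ax \leq b\}$ of the simple-$\mathcal{H}$-polytope problem. After translating so that the origin lies in the interior (which we may assume, since if $P$ is empty or unbounded the problem is trivial, and finding an interior point can be done by linear programming in polynomial time), we may take $b > 0$ coordinatewise. Then the dual polytope is
\begin{equation*}
P^* \;=\; \conv\!\bigl\{ b_i^{-1} a_i^\top : i = 1,\ldots, n \bigr\},
\end{equation*}
where $a_1,\ldots,a_n$ are the rows of $A$. This $\mathcal{V}$-description has exactly one vertex per facet of $P$, and its encoding size is polynomial in the encoding size of $A$ and $b$; in particular, no new numerical data are introduced beyond the entries $b_i^{-1}a_{ij}$, so strong NP-completeness is preserved by the reduction.

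The second step is the well-known face-lattice correspondence: polarity reverses the face lattice of $P$, sending facets to vertices and vertices to facets. Therefore a vertex $v$ of $P$ is adjacent to exactly $d$ facets if and only if the corresponding facet $F_v$ of $P^*$ contains exactly $d$ vertices, i.e.\ is a simplex. Quantifying over all vertices of $P$, this yields the equivalence
\begin{equation*}
P \text{ is simple} \iff P^* \text{ is simplicial}.
\end{equation*}

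Combining these two observations, an oracle deciding simpliciality of $\mathcal{V}$-polytopes in polynomial time would, via the polynomial-time map $P \mapsto P^*$, decide simplicity of $\mathcal{H}$-polytopes in polynomial time, contradicting Lemma~\ref{lem: H-description simple}. Hence the simplicial-$\mathcal{V}$-polytope problem is strongly NP-hard; it is clearly in NP (a non-simplicial certificate is a facet with more than $d$ vertices, verifiable in polynomial time via linear programming), so it is strongly NP-complete. I do not anticipate a substantive obstacle; the only care needed is to ensure the assumption that $0 \in \interior(P)$ is handled in polynomial time and that the polar map does not blow up bit complexity, both of which are standard.
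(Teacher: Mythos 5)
Your proposal is correct and takes exactly the approach the paper intends: the paper's entire proof is the one-line remark ``Passing to the dual polytope provides an immediate corollary,'' and your argument simply spells out what that reduction is (translate so $0 \in \interior(P)$, polarize to get a $\cal V$-description from an $\cal H$-description of the same size, and use that polarity reverses the face lattice so simple $\iff$ simplicial). The additional details you supply — that the interior point can be found by LP, that the bit complexity of the polar does not blow up, and that membership in NP comes from exhibiting a non-simplex facet — are all sound and are exactly the implicit content of the paper's remark.
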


We now use this corollary to show that it is strongly NP-complete to determine whether a graph has a positively weighted $k$-design of size smaller than the facet bound shown in Theorem~\ref{thm:general upper bound}.

\begin{theorem} \label{thm: is facet bound tight complexity}
The following decision problem is strongly NP-complete. 

\textbf{Input:} \begin{enumerate}
    \item The Laplacian matrix $L= D-A \in \QQ^{n\times n}$ of a connected, positively weighted graph $G= ([n],E,w)$ which has $m$ Laplacian eigenspaces $\L_1 = \spanset \{ \ones\} < \L_2 <\ldots < \L_m$, and
    \item an integer $k \in \{2,\ldots, m-1\}$. 
\end{enumerate} 

\textbf{Question:} Is there a positively weighted $k$-graphical design  of $G$ consisting of fewer than $\sum_{i=1}^k \dim (\L_i)$ vertices?
\end{theorem}
\begin{proof}
We first show that this problem is in NP. Given a weight vector $a \geq 0$ with $\supp(a) = S \subset [n]$, it is polynomial time in $n$ to check whether $a$ defines a $k$-graphical design by checking if $U_{\boldsymbol {k}} a = 0$ and further whether $|S| < \sum_{i=1}^k \dim (\L_i).$
 
To show strong NP-completeness, we will show that the decision problem in Corollary~\ref{cor: V-description simplicial} is a sub-problem of the decision problem in the theorem statement. Let $P = \conv(\{v_1, \ldots, v_n\}\subset \QQ^d)$ be a $\cal V$-polytope with $d \leq n-2$. We note that if $d = n-1$, then the polytope $P$ must be a simplex and hence simplicial. By Theorem~\ref{lem: embed polytope algo}, there is an algorithm with time polynomial in $\size(P)$ to create a graph $G= ([n], E, w)$ with rational positive edge weights and eigenspaces $\L_1 = \spanset\{\ones\}, \L_2 , \L_3$ for which $P$ is affinely equivalent to $P_{\{1, 3\}} \simeq P_{\{3\}}$. Consider the eigenspace ordering $
\L_1 < \L_2 < \L_3 $. By Theorem \ref{thm: main gale for D-A}, the facets of $P$ are in bijection with the minimal positively weighted $2$-graphical designs. A facet of $P$ is a simplex if and only if it contains exactly $d$ vertices. The polytope $P$ is then simplicial if and only if every minimal positively weighted $2$-graphical design has exactly $n-d = 1 + \dim(\L_2)$ vertices. Thus if we could determine whether a graph $G = ([n],E,w)$ has a $2$-graphical design of size smaller than the bound in Theorem~\ref{thm:general upper bound}, we could determine whether an arbitrary polytope $P$ was simplicial. Any $k$-design is also a $k'$-design for all $k'< k$, so the case of general $k$-designs must also be hard. 
\end{proof}

We note a further straightforward consequence of Theorem~\ref{lem: H-description simple} which we have not found explicitly stated in the literature.  We also credit Alexander E. Black for making the same observation independently.

\begin{corollary} \label{cor: largest facet is hard}
    The following problem is NP-hard.

\textbf{Input:} A rational $\cal V$-polytope $P = \conv(\{v_1,\ldots, v_n\}) $, $v_i \in \QQ^d$. 

\textbf{Output:} A facet $F = \conv(\{ v_{i_1}, \ldots, v_{i_j}\})$ of $P$ containing the maximum number of vertices of $P$ among all facets of $P$.
\end{corollary}

\begin{proof}
    A solution to this problem implies a solution to Corollary~\ref{cor: V-description simplicial}. Suppose there was an algorithm to find such a facet $F = \conv(\{ v_{i_1}, \ldots, v_{i_j}\})$.  If $j >d$, then $F$ is a non-simplex facet of $P$, hence $P$ is not simplicial.  If $j = d$, then every facet of $P$ has at most $d$ vertices by the maximality of $F$. Since a facet of a $d$-dimensional polytope must have at least $d$ vertices, it follows that every facet is the convex hull of exactly $d$ vertices, i.e. is a simplex.  Thus $P$ is simplicial. 
\end{proof}

Thus we can make the following translation to graphical designs. 
\begin{theorem} \label{thm: smallest design complexity}
The following problem is NP-hard. 

\textbf{Input:} \begin{enumerate}
    \item The Laplacian matrix $L= D-A \in \QQ^{n\times n}$ of a connected, positively weighted graph $G= ([n],E,w)$ with $m$ Laplacian eigenspaces $\L_1 = \spanset \{ \ones\} < \L_2 <\ldots < \L_m$, and
    \item an integer $k \in \{2,\ldots, m-1\}$. 
\end{enumerate} 

\textbf{Output:} A minimum cardinality positively weighted $k$-graphical design.
\end{theorem}

\begin{proof}
    The proof is nearly identical to that of Theorem~\ref{thm: is facet bound tight complexity}, so we will be brief.  A minimum cardinality positively weighted $k$-design is in bijection with a facet of the corresponding eigenpolytope containing a maximum number of eigenpolytope vertices.  Given a $\cal V$-polytope $P$, we can create a positively weighted graph $G$ which has an eigenpolytope that is affinely equivalent to $P$ in time polynomial in $\size(P)$. Thus an algorithm for this problem implies an algorithm for Corollary~\ref{cor: largest facet is hard}.
\end{proof}

Using a similar argument, we can show that it is \#P-complete to count the number of positively weighted $k$-designs of a graph. It was shown independently in \cite{DyerVertexEnum} and \cite{LinialEnum} that it is \#P-complete to count the number of facets of a polytope given by its vertices.

\begin{lemma}[\cite{DyerVertexEnum,LinialEnum}] \label{lem: V-description count facets} The following counting problem is \#P-complete.

\textbf{Input:} A rational $\cal V$-polytope $P = \conv(\{ v_1,\dots v_n\})$, $v_i \in \QQ^m$.

\textbf{Output:} Number of facets of $P.$
\end{lemma}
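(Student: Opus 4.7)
The plan is to establish both membership in \#P and \#P-hardness, with the latter proceeding via polarity from a known \#P-complete counting problem.

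For membership in \#P, observe that a facet of $P = \conv\{v_1, \ldots, v_n\} \subset \RR^d$ admits a short certificate consisting of a subset $F \subseteq [n]$ together with a hyperplane $\{x : a^\top x = b\}$ such that $a^\top v_i = b$ for $i \in F$ and $a^\top v_j < b$ for $j \notin F$, with the additional requirement that $\{v_i : i \in F\}$ affinely spans a $(d-1)$-dimensional subspace. Each of these conditions can be verified in polynomial time using Gaussian elimination, so facet counting lies in \#P.

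For \#P-hardness, I would invoke polarity. After translating $P$ so that $0$ lies in its relative interior (an affine operation that preserves combinatorial structure), the polar dual $P^\ast = \bigcap_{i=1}^n \{x : v_i^\top x \leq 1\}$ is a polytope in $\cal H$-description that can be written down in polynomial time. The face-reversing duality puts facets of $P$ in bijection with vertices of $P^\ast$, so counting facets of a polytope given in $\cal V$-description reduces in polynomial time to counting vertices of a polytope in $\cal H$-description. If the latter is \#P-complete, the lemma follows.

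Thus the heart of the proof is to show that vertex counting for $\cal H$-polytopes is \#P-complete. Here I would follow the Dyer--Linial strategy of reducing from a classical counting problem such as \#KNAPSACK or \#2-SAT. Given an instance on $n$ Boolean variables, one builds a system of linear inequalities whose bounded feasibility region has vertices in bijection with the solutions. The main technical obstacle is designing the $\cal H$-description so that each solution contributes exactly one vertex and no spurious extreme points appear; this is typically handled by perturbing the inequality coefficients to force general position while preserving the intended combinatorial correspondence. Since the base counting problem is \#P-complete and the reduction is polynomial in the input encoding, chaining it through the polarity reduction above completes the proof.
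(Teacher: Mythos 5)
The paper does not prove this lemma; it is cited outright to Dyer and Linial and used as a black box in the subsequent complexity arguments. So there is no ``paper's own proof'' to compare against, and your sketch must be evaluated on its own merits. Your overall route --- membership in \#P, then \#P-hardness by polarity to vertex counting for $\cal H$-polytopes, then hardness of the latter by reduction from a classical counting problem --- is indeed the standard approach and matches the strategy of the cited works.

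A few points deserve tightening. For \#P membership, including the hyperplane $(a,b)$ in the certificate is a bug, not just a redundancy: each facet would then be witnessed by infinitely many certificates (one per scaling of $(a,b)$), so the machine's accepting-path count would not equal the facet count. Use only the index set $F$ as the nondeterministic guess; the supporting hyperplane is then recovered deterministically from $F$ by Gaussian elimination, and you verify affine rank $d-1$ and strict separation of the remaining $v_j$. For the polarity step, if $P$ is not full-dimensional then after centering, $\bigcap_i \{x : v_i^\top x \le 1\}$ in the ambient $\RR^d$ is unbounded (it contains the orthogonal complement of $\operatorname{aff}(P)$), so you must first restrict to $\operatorname{aff}(P)$, which is a routine polynomial-time computation; alternatively assume WLOG that $P$ is full-dimensional. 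Finally, the heart of the result --- that counting vertices of an $\cal H$-polytope is \#P-hard --- is left as ``follow the Dyer--Linial strategy,'' which is exactly the nontrivial content of the cited papers. You correctly flag the main obstacle (spurious extreme points from degeneracy), but you do not resolve it, so as a standalone argument this step is incomplete; as a reconstruction of the cited result's structure, however, it is accurate.
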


We again use Theorem~\ref{lem: embed polytope algo} to hide the facet-counting polytope problem in the problem of counting graphical designs.

\begin{theorem} \label{thm: count designs} The following counting problem is \#P-complete.

\textbf{Input:} \begin{enumerate}
    \item The Laplacian matrix $L= D-A \in \QQ^{n\times n}$ of a connected, positively weighted graph $G= ([n],E,w)$ which has $m$ Laplacian eigenspaces $\L_1 = \spanset \{ \ones\} < \L_2 <\ldots < \L_m$, and
    \item an integer $k \in \{2,\ldots, m-1\}$. 
\end{enumerate} 

\textbf{Output:} The number of minimal positively weighted $k$-graphical designs of $G$.
\end{theorem}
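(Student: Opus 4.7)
The plan is to mirror the proof of Theorem~\ref{thm: complexity is facet bound tight}, replacing Corollary~\ref{cor: V-description simplicial} with Lemma~\ref{lem: V-description count facets} and upgrading the reduction from a decision reduction to a parsimonious (count-preserving) one. First I would establish membership in \#P. A positively weighted $k$-design is certified by a nonzero vector $a \geq 0$ with $U_{\boldsymbol{k}}a = 0$ whose support is support-minimal among such vectors. Given a candidate support $S \subseteq [n]$, one can check in polynomial time whether $S$ is the support of a circuit of the matrix $U_{\boldsymbol{k}}$ by solving a linear feasibility problem for each proper subset (equivalently, by computing ranks of submatrices), so counting minimal designs lies in \#P.

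Next I would build the reduction. Given a polytope $P = \conv(\{v_1,\ldots,v_n\}) \subset \RR^d$ in $\cal V$-description, first dispense with the trivial case $d = n-1$ where $P$ is a simplex with exactly $n$ facets. Assume $d \leq n-2$. Apply Lemma~\ref{lem: embed polytope algo} to construct in $\cal O(n^3)$ time a connected positively weighted graph $G = ([n], E, w)$ whose eigenspaces $\L_1 = \spanset\{\ones\}, \L_2, \L_3$ satisfy $P_{\{1,3\}}$ combinatorially equivalent to $P$. Order the eigenspaces as $\L_1 < \L_2 < \L_3$ and take $k = 2$, so that $\boldsymbol{k} = \{2\}$ and $\overline{\boldsymbol{k}} = \{1,3\}$. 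By Theorem~\ref{thm: main gale for D-A}, the minimal positively weighted $2$-graphical designs of $G$ are in bijection with the facets of $P_{\overline{\boldsymbol{k}}} = P_{\{1,3\}}$, and hence with the facets of $P$.

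Since the reduction is polynomial-time and the bijection of Theorem~\ref{thm: main gale for D-A} is an exact bijection, the number of minimal positively weighted $2$-designs of $G$ equals the number of facets of $P$. Thus an oracle for the counting problem in the statement solves the facet-counting problem of Lemma~\ref{lem: V-description count facets}, which gives \#P-hardness. Combined with \#P membership, this establishes \#P-completeness.

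The main subtlety will be ensuring the reduction is genuinely parsimonious, namely that distinct facets of $P$ yield distinct minimal designs of $G$ and vice versa. This is immediate from Theorem~\ref{thm: main gale for D-A}, which gives a true bijection indexed by complementary subsets of $[n]$, provided the combinatorial equivalence produced by Lemma~\ref{lem: embed polytope algo} preserves the labeling of vertices by $[n]$; inspecting the construction (which takes a fixed $\cal V$-configuration, centers it, Gale-duals it, and orthonormalizes without permuting columns) shows the vertex labeling is preserved, so the facet-to-design correspondence is count-exact.
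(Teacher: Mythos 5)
Your proposal is correct and follows essentially the same reduction as the paper: both apply Lemma~\ref{lem: embed polytope algo} to realize $P$ as $P_{\{1,3\}}$, take $k = 2$, and invoke the bijection of Theorem~\ref{thm: main gale for D-A} to transfer the facet count to the count of minimal positively weighted $2$-designs. You additionally spell out \#P-membership and the parsimony of the reduction, both of which the paper leaves implicit.
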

\begin{proof}
We will show that the counting problem in Lemma~\ref{lem: V-description count facets} is a sub-problem of this counting problem.  Let $P = \conv(\{v_1, \ldots, v_n\}\subset \QQ^d) $ be a polytope with $d \leq n-2$. We note that if $d = n-1$, then the polytope $P$ must be a simplex and hence has $n$ facets. By Theorem~\ref{lem: embed polytope algo}, there is a (strongly) polynomial time algorithm to create a graph $G= ([n], E, w \in \QQ_{>0}^E)$ with eigenspaces $\L_1 = \spanset\{\ones\}, \L_2 , \L_3$ for which $P$ is affinely equivalent to $P_{\{1, 3\}} \simeq P_{\{3\}}$. Consider the eigenspace ordering $\L_1 < \L_2 < \L_3 $. By Theorem~\ref{thm: main gale for D-A}, the facets of $P$ are in bijection with the minimal positively weighted $2$-graphical designs. Thus any enumeration of the minimal positively weighted $2$-graphical designs of $G$ would count the facets of $P$, hence this counting problem must be \#P-complete. This problem must be hard for any $k$ because a $k$-graphical design is also a $k'$-graphical design for all $k'< k$.
\end{proof}

Many algorithmic polytope questions are open when the dimension is not fixed; we refer  to \cite{KaibelPolyComplexSurvey} for a survey of algorithmic problems regarding polytopes. In particular, the complexity of providing an $\cal H$-description of a polytope given a $\cal V$-description is unknown. This is known as \emph{facet enumeration} or the \emph{convex hull problem}, and is widely believed to be difficult. A complexity result for the facet enumeration would provide a complexity result for the problem of listing all minimal $k$-graphical design, using the same framework as in the proofs of this section.

\begin{remark} \label{rem: polytime LP}
We note that it is polynomial time to find a not necessarily minimal $k$-graphical design. A solution to the linear program
\begin{equation}
\min c^\top x \qquad \text{   s.t.   } U_{\boldsymbol{k}} x = 0, \ones^\top x = 1, x \geq 0 
\label{eq: LP relaxation}
\end{equation}
provides a positively weighted $k$-graphical design for any cost vector $c\in \RR^n$, though the design need not be minimum or minimal.  
We recall that linear programming is polynomial time by \cite{KhachiyanLP, KarmarkarLP}, and moreover one can find an \emph{extreme point solution} in polynomial time \cite[Theorem 2.1.6]{IterCombOpti}. The Rank Lemma from linear programming guarantees some sparsity of extreme point solutions.
\begin{lemma}\cite[Lemma 1.2.3]{IterCombOpti}
    An extreme point solution $x^*$ of 
\begin{equation*}
\min c^\top x \qquad \text{   s.t.   } Ax = b, x \geq 0 
\end{equation*}
is zero on $\textup{corank}(A)$ coordinates.
\end{lemma}
Thus an extreme point solution of the LP in (\ref{eq: LP relaxation}) has $n - k$ zero coordinates, which is to say the design contains $k$ vertices. 
The one-norm is often used as a proxy for sparseness, but the constraints $\ones^\top x = 1, x \geq 0$ imply that $\|x\|_1 = 1$ for all feasible $x$ in (\ref{eq: LP relaxation}). While it thus theoretically makes no sense to minimize the one-norm, doing so in practice using MATLAB and Gurobi \cite{MATLAB,gurobi} has yielded very sparse graphical designs, often sparser than the guarantee of the Rank Lemma.  
\end{remark}

\printbibliography

\end{document}